\newcommand{\toc}{\tableofcontents}
\theoremstyle{plain}
\newtheorem{theorem}{Theorem}[section]
\newtheorem*{theorem*}{Theorem}
\newtheorem*{corollary*}{Corollary}
\newtheorem{lemma}[theorem]{Lemma}
\newtheorem{proposition}[theorem]{Proposition}
\newtheorem{claim}[theorem]{Claim}
\theoremstyle{definition}
\newtheorem{remark}[theorem]{Remark}
\newtheorem{question}[theorem]{Question}
\newtheorem{definition}[theorem]{Definition}
\newtheorem*{definition*}{Definition}
\newcommand{\p}{\varphi}
\newcommand{\e}{\varepsilon}
\newcommand{\IR}{\mathbb{R}}
\newcommand{\ZI}{\mathbb{Z}}
\newcommand{\IN}{\mathbb{N}}
\DeclareMathOperator{\del}{\partial}
\DeclareMathOperator{\impl}{\Rightarrow}
\newcommand{\ts}{\textsection}
\newcommand{\dirlim}{\varinjlim}
\newcommand{\Aut}{\mathrm{Aut}}
\newcommand{\Id}{\mathrm{Id}}
\title{Sidon sequences and nonpositive curvarture}
\author{Sylvain Barr\'e}
\author{Mika\"el Pichot}
\address{Sylvain Barr\'e, UMR 6205, LMBA, Université de Bretagne-Sud,BP 573, 56017, Vannes, France}\email{Sylvain.Barre@univ-ubs.fr}
\address{Mika\"el Pichot, McGill University, 805 Sherbrooke St W., Montr\'eal, QC H3A 0B9, Canada}\email{pichot@math.mcgill.ca}
\begin{document}

\setcounter{tocdepth}{1}

\begin{abstract} 

A sequence $a_0<a_1<\ldots<a_n$  of nonnegative integers  is called a Sidon sequence if the sums of pairs $a_i+a_j$ are all different. In this paper we construct CAT(0) groups and spaces from Sidon sequences. The arithmetic condition of Sidon is shown to be equivalent to  nonpositive curvature, and the number of ways to represent an integer as an alternating sum of triples $a_i-a_j+a_k$ of integers from the Sidon sequence, is shown to determine the structure of the space of embedded flat planes in the associated CAT(0) complex. 
\end{abstract}

\maketitle

\section{Introduction}

A sequence $a_0 < a_1<\ldots <a_n$ of nonnegative integers is called a Sidon sequence if the sums of pairs $a_i+a_j$ (for  $i\leq j$) are pairwise different. Here we  assume that $a_0=0$. An example is $0, 2, 7, 8, 11$. 

Every Sidon sequence $a_0=0 < a_1<\ldots <a_n$ can be  extended by letting $a_{n+1}$ be  the smallest integer such that $a_0<\ldots <a_{n+1}$ satisfies the condition of Sidon. Starting at $a_0=0$, this process defines an infinite Sidon sequence
\[
0, 1, 3, 7, 12, 20, 30, 44, 65, 80, 96,...
\]
 called the Mian--Chowla sequence. 

Sidon, in connection with his investigations on Fourier series,  was interested in estimates of the number of terms not exceeding $N$ a Sidon sequence can have. In general, there are at most $<<N^{1/2}$ terms  in the interval $[0,N]$, and the Mian--Chowla sequence, for example, which verifies $a_n\leq n^3$, contains $>>N^{1/3}$ terms in this interval (see \cite{S}). Erd\"os conjectured that for every $\e>0$, there should exist denser infinite Sidon sequences with $>>N^{1/2-\e}$ terms. This problem is well studied. We quote here the well--known theorem of Ruzsa \cite{Ruzsa} which constructs sequences with 
$>>N^{\gamma+o(1)}$ terms, where $\gamma=\sqrt 2-1$.  
For finite  sequences, a famous theorem of Singer \cite{ErdosSinger,Si} in projective geometry provides,  for any fixed $\e>0$,  Sidon sequences with $> N^{1/2}(1-\e)$ terms for any large $N$. This  uses the fact that the asymptotic ratio of consecutive primes is 1, can be improved by using a sharper estimate for $p-p'$ where $p$ and $p'$ are consecutive primes (see \cite[\ts 2.3]{HR}). 
For the purpose of the present paper, we shall require the stronger condition that the sums of pairs $a_i+a_j$ ($i\leq j$) are all  different  modulo some integer $N\geq 2$. Such a sequence is called a \emph{Sidon sequence modulo $N$}. Clearly, every finite Sidon sequence is a Sidon sequence modulo $N$ for every  large enough integer $N$.

The present  paper  describes a connection between Sidon sequences and nonpositive curvature.  
 The basic idea is to associate with a Sidon sequence $a_0 < a_1<\ldots <a_n$ modulo $N$ a group $G$ acting geometrically a 2-complex $X$, whose geometric properties depend on the arithmetic properties of the sequence $a_0 < a_1<\ldots <a_n$, as follows: 
\begin{enumerate}
\item the sum of pairs condition  of Sidon ensures that the ambient space $X$ is nonpositively curved;
\item the alternating sums of triples $a_i-a_j+a_k$ of elements of the Sidon sequence  regulates the structure of embedded flat planes in the space $X$.    
\end{enumerate} 

\noindent We shall refer to \cite{BH} for nonpositive curvature and CAT(0) spaces. 

Before we state our main theorem,  we  indicate how  the structure of  embedded flat planes can be related to Sidon sequences. 
In some CAT(0) 2-complexes, including the ones defined in the present paper, the space of embedded flat planes  can encoded by means of a ring puzzle problem  \cite{autf2puzzles}.  A ring puzzle is a tessellation of the Euclidean plane obtained by using a (finite) set of shapes which are constrained locally by a set of conditions around every vertex. In this context, these local conditions are appropriately described by a (finite) set of  length $2\pi$ circles called the rings. Every ring is labeled and specifies unambiguously  the shapes that can be used in the neighbourhood of the given vertex. Here
we require ring puzzles in which all the shapes are equilateral triangles. 
We are given a set of $n+1$ equilateral triangles, labeled by integers in $\{0,\ldots, n\}$, together with a set of rings of the form:
 \begin{figure}[H]
 \begin{tikzpicture}
    \tikzset{narrow/.style={inner sep=3pt}}
    \draw [thick ] (0,0) circle[radius=1cm];
    \foreach \ang/\lab [evaluate=\ang as \anc using \ang+180]
        in {30/{$g$}, 90/{$f$}, 150/{$e$}, -150/{$j$}, -90/{$i$},-30/{$h$}}
        \draw [ultra thin] (\ang+30:.9)--(\ang+30:1.1) node{} (\ang:1) node[narrow, anchor=\anc]{\lab};
    \end{tikzpicture}
\end{figure}
\noindent where $e,f,g,h,i,j\in \{0,\ldots, n\}$ which specify which triangle label can be used around a vertex. The local condition ensures that the labels on the triangles and links are consistent.

Given these remarks, the following theorem  describes a connection between Sidon sequences, nonpositive curvature, and the structure of flats in the associated complexes.
The construction offers some freedom. Thus, one may use more than one Sidon sequence (here we shall have three possibly different sequences, all of the same length $n\geq 2$), and one may twist the construction by using permutations in the symmetric group $S_n$.

\begin{theorem}\label{T - main}
To every  integer $n\geq 2$, every triple of increasing sequences
\[
0\leq a_0^j < a_1^j<\ldots <a_n^j, \ \ j=1,2,3,
\]
every triple of integers $r_j$, $j=1,2,3$,
such that for every $j\in \{1,2,3\}$, the sequence $a_0^j < a_1^j<\ldots <a_n^j$ is a Sidon sequence modulo $r_j$, and every triples of bijections
\[
\sigma_j\colon \{a_0^j,\ldots, a_n^j\} \to \{0,\ldots , n\}, \ \ j=1,2,3
\] 
is associated a countable group $G$ acting properly and totally discontinuously on a CAT(0) complex $X$ of dimension 2 with compact quotient, and a $G$-invariant labelling $X^2\to \{0,\ldots,n\}$ of the faces of $X$, such that the following are \textbf{equivalent} for a flat plane with face labelled by the elements of $\{0,\ldots,n\}$: 

\begin{enumerate}
\item[(a)] $\Pi$ embeds in $X$ in a label preserving way
\item[(b)] $\Pi$ is the solution, with respect to the induced labelling, of a triangle ring puzzle problem with  rings of the form: 
\begin{figure}[H]
 \begin{tikzpicture}
    \tikzset{narrow/.style={inner sep=3pt}}
    \draw [thick] (0,0) circle[radius=1cm];
    \foreach \ang/\lab [evaluate=\ang as \anc using \ang+180]
        in {30/$\sigma(w)$, 90/$\sigma(v)$, 150/$\sigma(u)$, -150/$\sigma(w')$, -90/$\sigma(v')$,-30/$\sigma(u')$}
        \draw [ultra thin] (\ang+30:.9)--(\ang+30:1.1) node{} (\ang:1) node[narrow, anchor=\anc]{\lab};
    \end{tikzpicture}
\end{figure}
\noindent such that $\sigma=\sigma_j$, $k=\tau_j(0)$, $l=\tau_j(1)$, $j\in \{1,2,3\}$,  for every triples $(a,b,c)$ and $(a',b',c')$ of elements of $\{a_0^j,\ldots, a_n^j\}$ with equal alternating sum
\[
a-b+c=a'-b'+c' \mod r_j
\]
such that $a\neq b\neq c$, $a'\neq b'\neq c'$ and $(a,b,c)\neq (a',b',c')$.
\end{enumerate}
 \end{theorem}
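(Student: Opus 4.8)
The plan is to build the group $G$ and the complex $X$ by hand from a presentation that directly encodes the three Sidon sequences and the three bijections $\sigma_j$, and then to verify nonpositive curvature via the Sidon (sum-of-pairs) condition, identifying embedded flat planes with solutions of the stated ring puzzle via the alternating-sum (triple) condition.

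First I would construct $X$ as the development of a complex of groups, or equivalently present $G$ via a polygonal presentation with generating set indexed by $\{0,\dots,n\}$ (three copies, one per ``direction'' $j$), relators being commutators (or twisted commutators) dictated by the equations $a_i^j + a_k^{j'} = a_{i'}^{j}+a_{k'}^{j'}$, and with the $2$-cells equilateral triangles whose three edges carry the three directions $j=1,2,3$ and whose corners are labelled through the $\sigma_j$. The link of a vertex is then a bipartite-type graph built from the three ``alphabets'' $\{0,\dots,n\}$; I would compute the link explicitly and observe that every embedded circuit in the link has length exactly the combinatorial girth forced by the presentation. The \textbf{first key step}: the sum-of-pairs Sidon condition modulo $r_j$ is exactly what guarantees that there are no circuits of combinatorial length shorter than the threshold needed for the CAT($1$) (Gromov link) condition on a piecewise-Euclidean complex with equilateral triangular $2$-cells with vertex angle $\pi/3$ — i.e. girth $\geq 6$ links, hence no geodesic loop of length $<2\pi$. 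This gives nonpositive curvature, so $X$ is CAT(0) by Cartan--Hadamard, and $G$ acts properly cocompactly by construction.

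Next I would address the equivalence (a)$\Leftrightarrow$(b). For (a)$\Rightarrow$(b): given a label-preserving embedding of a flat plane $\Pi$, restrict the labelling of $X$ to $\Pi$; since $\Pi$ is flat, at each of its vertices six equilateral triangles meet, and the cyclic word of their labels around that vertex must be a length-$2\pi$ circuit in the link of that vertex of $X$. Reading off how the links of $X$ were constructed, each such circuit is precisely one of the rings described in (b), with the constraint that the two triples $(a,b,c)$ and $(a',b',c')$ of ``opposite'' corner labels satisfy the alternating-sum congruence $a-b+c \equiv a'-b'+c' \pmod{r_j}$ for the appropriate $j$ — this is exactly the combinatorial shadow, at the level of the link, of the defining relators of $G$ (which come from the sum-of-pairs relations, and whose ``triangular'' consequences are the alternating-sum relations). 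The non-degeneracy conditions $a\neq b\neq c$ etc.\ and $(a,b,c)\neq(a',b',c')$ encode that the six triangles around a vertex are genuinely distinct and form an embedded (not backtracking) ring. For (b)$\Rightarrow$(a): a solution of the ring puzzle is a labelled tessellation of $\mathbb{E}^2$ by equilateral triangles in which every vertex-figure is one of the admissible rings; I would lift it step by step into $X$ — start with one triangle, and extend across each edge using that the admissible rings are exactly the vertex links of $X$, so at each stage the partial map into $X$ extends uniquely and compatibly — using CAT(0)-ness (uniqueness of extensions, no holonomy) to see that the resulting local isometry $\Pi \to X$ is a well-defined global embedding.

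The \textbf{main obstacle} I anticipate is the bookkeeping that makes the passage ``sum-of-pairs relation $\leftrightarrow$ link girth'' and ``alternating-sum-of-triples relation $\leftrightarrow$ ring shape'' precise: one must choose the presentation of $G$ (the exact form of the relators, including the twist by the $\sigma_j$ and the role of $r_j$ as a modulus, presumably via a $\mathbb{Z}/r_j$-valued coordinate on one family of edges) so that \emph{simultaneously} (i) the link has no short circuits iff each sequence is Sidon mod $r_j$, and (ii) the length-$2\pi$ circuits in the link are in bijection with solutions of the alternating-sum equation. Concretely, I would model an edge in direction $j$ as carrying a $\mathbb{Z}/r_j$-label, a triangle as recording on its three edges a triple summing to zero (or to a fixed constant) modulo the relevant moduli, and then check that closing up a hexagonal cycle in the link forces precisely a coincidence $a-b+c \equiv a'-b'+c'$; the Sidon condition intervenes to forbid the degenerate shorter closings. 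Getting the indices, the three directions, and the three bijections to line up coherently — and verifying that the action is proper and totally discontinuous rather than merely cocompact — is where the real work lies; the curvature computation and the puzzle translation are then essentially formal once the setup is right.
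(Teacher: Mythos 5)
Your overall strategy for the two equivalences --- the Sidon condition corresponding to girth $\geq 2\pi$ of the links, and embedded flats corresponding to ring-puzzle solutions read off from length-$2\pi$ circuits in the links --- matches the paper's. The genuine gap is in the construction of $G$ and $X$ itself, which is where the bulk of the paper's work lies and which your proposal both leaves open and misdescribes. The relators you propose (``commutators \dots dictated by the equations $a_i^j + a_k^{j'} = a_{i'}^{j}+a_{k'}^{j'}$'') cannot be right: commutator relators produce square $2$-cells, not triangles, and, more importantly, the link of a vertex of colour $j$ is governed by the single sequence $(a_i^j)$ alone --- it is the quotient graph $S_{r_j}(a_0^j,\ldots,a_n^j)$ on $\mathbb{Z}/2r_j\mathbb{Z}$ with edges of increment $2a_q^j-1$ issued from even vertices --- not by equations mixing two directions; the Sidon condition enters separately for each $j$ as the absence of (possibly degenerate) squares in that one bipartite graph. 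The paper does not build $X$ from a presentation at all: it prescribes the three coloured link graphs and constructs $X$ as a direct limit of balls (Theorem \ref{T - main homogeneous colourings}), using unique-extension lemmas for coloured trees of diameter $\leq 5\pi/3$ --- so the girth hypothesis is used a second time, for the existence and uniqueness of $X$, not only for the curvature bound --- and then takes $G$ to be the coloured automorphism group, with properness and cocompactness coming from complete homogeneity. A presentation-based route is not hopeless, but you would still have to exhibit a concrete triangle presentation whose vertex links are exactly the graphs $S_{r_j}$, which is essentially the work you have deferred.

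A second omission: the gluing data involve, besides the $\sigma_j$, bijections $\tau_j\colon\{0,1\}\to\{1,2,3\}\setminus\{j\}$ assigning edge colours to the two sides of the bipartition of each link, so that a priori the construction yields $2^3$ complexes $X_{(\e_1,\e_2,\e_3)}$. The paper must, and does in \ts\ref{S - Polarity}, prove these are pairwise isomorphic by exhibiting label-preserving polarities $k\mapsto -k+2a_p-1$ of the graphs $S_{r_j}$; without this step the complex is not canonically attached to the data in the statement. Your proposal has no counterpart of this. Finally, your sketch of (b)$\Rightarrow$(a) is the paper's argument (extend ball by ball, using that every label-preserving embedding of a segment of a ring into a link extends to the whole ring), but that extension lemma rests on the transitivity and polarity properties of the coloured links, not on ``CAT(0)-ness / no holonomy'' as you suggest.
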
 
 
 In the simplest situation, we shall introduce, for every Sidon sequence $a_0=0,\ldots, a_n$, a CAT(0) complex $X(a_0,\ldots, a_n)$,  called \emph{the modular complex of the Sidon sequence $a_0,\ldots, a_n$} (see Definition \ref{D- modular complex}).  It follows by Theorem \ref{T - main} that the automorphism group of $X(a_0,\ldots, a_n)$ is transitive on the vertex set.  These complexes appear to be new geometric objects for the most part, and we shall describe some examples  (old and new) in \ts\ref{S - examples} below.

Informally, Theorem \ref{T - main} \emph{reduces the description of embedded flat planes in the resulting complexes (including in modular complexes) to the resolution of a paving problem in $\IR^2$}. The latter problem is purely 2-dimensional in nature and has a number theoretic component, since the rings are obtained from the  representations of a modular integer as an alternating sum of triples from the given Sidon sequences.

Assuming that the Sidon sequences $(a_j^j)$ and the integers $r_j$ are fixed, we note that the  choice of the $\sigma_j$'s and $\tau_j$'s generates a priori $(n+1)!^3$ distinct groups and spaces associated with these data. The associated 2-dimensional paving problems depends on the choice of these elements a priori, and so does the space of embedded flats in $X$.

\bigskip

The paper is structured as follows.
The proof of Theorem \ref{T - main} has been divided into five steps,  occupying \ts \ref{S - graph S} to \ts\ref{S - puzzles embed}. It is shown in \ts\ref{S - graph S}  that the nonpositive curvature condition on  links (namely, having girth at least $2\pi$)  is equivalent to the arithmetic condition of Sidon. A new class of complexes, called completely homogeneous complexes, is introduced in \ts\ref{S - homogeneous}. The modular complexes are example of completely homogeneous complexes. In \ts\ref{S- signs} we prove a weaker version of Theorem \ref{T - main} in which has an additional ``sign redundancy''; the latter is removed in \ts \ref{S - Polarity}, by showing that the complexes in Theorem \ref{T - main} always admit sufficiently many ``polarities''; in particular, the various constructions in \ts\ref{S- signs} are in fact pairwise isomorphic. It is then possible to establish the equivalence stated in Theorem \ref{T - main} in \ts\ref{S - puzzles embed}.
The last section of the paper, \ts\ref{S - examples}, concludes with some examples and applications.

\bigskip
\bigskip

\noindent\textbf{Acknowledgment.}   The authors are partially funded by NSERC Discovery Fund 234313. 

\bigskip


\toc

\section{Sidon sequences}\label{S - graph S}

We refer to \cite{HR, O} for surveys on finite and infinite Sidon sequences; here we begin the proof of Theorem \ref{T - main}.

The first step in the proof is to relate the arithmetic condition of Sidon to a geometric condition on  spaces (i.e., nonpositive curvature via the link condition \cite{BH}).

Let $a_0=0<a_1<\ldots <a_n$ be a sequence of positive integers. We consider the graph $S$ on $\ZI$ in which $n+1$ edges are issued from every even vertex with an increment of $2a_r-1$, for every $r\in [0,\ldots, n]$. We write $S=S(a_1,\ldots ,a_n)$ to indicate the dependency in the sequence of integers.

\begin{theorem}
	Let $a_0=0<a_1<\ldots <a_n$ be an increasing sequence of positive integers and let $S=S(a_1,\ldots ,a_n)$. Let $N\geq 1$ be an integer. The following are equivalent:
	\begin{enumerate}
		\item the sequence $a_0,a_1,\ldots ,a_n$ is a Sidon sequence modulo $N$;
		\item the quotient of $S$ by the action of $2N\ZI$ by translations has girth at least 6.
	\end{enumerate}	
\end{theorem}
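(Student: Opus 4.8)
The plan is to make the combinatorial description of the graph $S$ completely explicit and then translate "girth $\geq 6$" into a statement about collisions among short walks. Recall that $S=S(a_1,\ldots,a_n)$ has vertex set $\IZ$, and from each even vertex $2m$ there are $n+1$ edges, the $r$-th one going to $2m+(2a_r-1)$, which is odd; equivalently every odd vertex $2m-1$ receives exactly one edge of "type $r$" from the even vertex $2m-2a_r$. So $S$ is bipartite between even and odd vertices, $(n+1)$-regular, and a closed walk in $S$ alternates even-odd-even-$\ldots$; in the quotient $\bar S = S/2N\IZ$ the vertices are $\IZ/2N\IZ$, still bipartite into $N$ even and $N$ odd classes. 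Since $\bar S$ is bipartite it has no odd cycles, so girth at least $6$ is equivalent to the absence of $4$-cycles, i.e. no two distinct vertices are joined by two distinct edge-paths of length $2$.

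First I would observe that a length-$2$ path from an even vertex $2m$ consists of going up by $2a_r-1$ to an odd vertex and then coming back down by $2a_s-1$ to the even vertex $2m+2a_r-2a_s$; and a length-$2$ path from an odd vertex $2m-1$ consists of going down to $2m-2a_r$ and back up to $2m-2a_r+2a_s-1 = 2m-1 + 2(a_s-a_r)$. In both cases, modulo $2N$, the set of "displacement, intermediate vertex" data reachable in two steps is governed by the differences $a_r - a_s \bmod N$. A $4$-cycle through an even vertex therefore corresponds exactly to two distinct ordered pairs $(r,s)\neq (r',s')$ with $r\neq s$, $r'\neq s'$ (the $r=s$ move backtracks, which is not a cycle) such that the intermediate odd vertices differ — actually one must be careful: a $4$-cycle is $2m \to (\text{odd}) \to 2m+2a_r-2a_s \to (\text{odd}') \to 2m$, and closing up forces the net displacement to be $0 \bmod 2N$, so $a_r - a_s \equiv a_{r'} - a_{s'} \pmod N$, together with the condition that the two intermediate odd vertices are distinct so that the cycle is genuinely embedded. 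I would check that after accounting for these non-degeneracy conditions, the existence of such a configuration is equivalent to the existence of indices with $a_r + a_{s'} \equiv a_{r'} + a_s \pmod N$ and $(r,s)\neq(r',s')$, $\{r,s\}\neq\{r',s'\}$ appropriately — i.e. a nontrivial collision among pairwise sums modulo $N$. The same analysis starting from an odd vertex gives the same condition by the symmetry $a_r-a_s \leftrightarrow a_s-a_r$.

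Then I would spell out the dictionary: the sequence $a_0,\ldots,a_n$ fails to be Sidon modulo $N$ precisely when there exist $i\leq j$ and $k\leq l$ with $\{i,j\}\neq\{k,l\}$ and $a_i+a_j \equiv a_k+a_l \pmod N$; rearranged, $a_i - a_k \equiv a_l - a_j \pmod N$, which is exactly a $4$-cycle in $\bar S$ by the computation above. Conversely a $4$-cycle yields such a relation. The only thing to watch is the degenerate cases where a "collision" actually produces a backtracking walk rather than an embedded $4$-cycle (e.g. $\{i,j\}=\{k,l\}$ as multisets but listed in different order, or an intermediate vertex coinciding so the "cycle" degenerates to a $2$-cycle or a single edge). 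I expect these degeneracies to be the main obstacle — not deep, but requiring care: one must verify that a genuine Sidon failure (a collision with $\{i,j\}\neq\{k,l\}$ as multisets) always gives two distinct edges between two distinct vertices, hence an honest $4$-cycle, and that conversely the bipartiteness rules out $3$-cycles and $5$-cycles so that girth is controlled entirely by $4$-cycles. Handling the boundary case $N$ small (so that the quotient wraps around, possibly creating short cycles from the cyclic identification rather than from an arithmetic collision) should also be checked, but the displacement-$0 \bmod 2N$ bookkeeping above already absorbs it. Once the degeneracies are cleared, the equivalence (1)$\Leftrightarrow$(2) is immediate from the dictionary.
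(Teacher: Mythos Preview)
Your approach is correct and essentially identical to the paper's: both use bipartiteness of $S_N$ to reduce girth $\geq 6$ to the absence of (possibly degenerate) $4$-cycles, and then translate a $4$-cycle into the arithmetic condition $a-b+c-d\equiv 0\pmod N$ with $a\neq b\neq c\neq d\neq a$, which is exactly the failure of the Sidon condition. The only stylistic difference is that the paper begins by reformulating Sidon mod $N$ as ``$a-b+c-d\not\equiv 0$ for all $a\neq b\neq c\neq d\neq a$,'' which absorbs your repeated-edge and degeneracy worries into a single alternating-sum statement rather than handling them as side cases.
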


\begin{proof}
	Note that  the condition of Sidon, that the sums of pairs 
	\[
	a_i+a_j\  \ (i\leq j)
	\] 
	are all different modulo $N$, is verified if and only if the following alternating sums do not vanish
	\[
	a-b+c-d\neq 0\mod N
	\]
	for every quadruple $a\neq b\neq c\neq d\neq a$ chosen from the set $\{a_0,a_1,\ldots ,a_n\}$.
	
	We write $S_N$ for the quotient graph of $S$ by the translation action of $2N\ZI$.	
	By definition, $S_{N}$ is a bipartite graph.

	Suppose that
	\[
	a-b+c-d\neq 0\mod N
	\]
	for every $a\neq b\neq c\neq d\neq a$ as above. We claim that $S_{N}$ has no repeated edges. 
	Indeed, a repeated edge corresponds arithmetically to the the existence of distinct integers  $a\neq b$ such that $2a-1=2b-1\mod 2N$. Then 
	\[
	a-b+a-b=0\mod N.
	\] 
	This contradicts the fact that the condition $a\neq b\neq c\neq d\neq a$ is verified when $a\neq b$, $c=a$, $d=b$.
	
	Next we claim that $S_N$ contains no square. 
	
	Observe that a square in $S_N$ corresponds arithmetically to the choice of four elements   $a,b,c,d$ in the Sidon sequence, such that 
	\[
	a\neq b\neq c\neq d\neq a,
	\]
	 which indicates that consecutive edges in the square are distinct, and
	\[
	(2a-1)-(2b-1)+(2c-1)-(2d-1)=0\mod 2N,
	\]
	which indicates that the edges form a square. 
	However, the equality
	\[
	2(a-b+c-d)= 0\mod 2N
	\] 
again contradicts the Sidon condition modulo $N$. 
	
	This establishes that the condition of Sidon translates geometrically into the absence of (possibly degenerate) squares in the graph $S_N$.

	Since the graph $S_N$ is bipartite, it contains no cycle of odd length. This implies that the girth of $S_{N}$ is  $\geq 6$.

	Conversely, suppose that $S_{N}$ has girth at least 6. Since $S$ contains no square, it follows that
	\[
	(2a-1)-(2b-1)+(2c-1)-(2d-1)\neq 0\mod 2N
	\]
	where $a\neq b\neq c\neq d\neq a$. This implies that $a-b+c-d\neq 0\mod N$. Therefore, the Sidon condition holds modulo $N$.
\end{proof}

We refer to \cite{BP} for an earlier use of Sidon sequences in graph theory (in a different context).

\section{Completely homogeneous colourings of cell complexes}\label{S - homogeneous}

The second step constructs a complex of dimension 2, together with a homogeneous colouring of its cells in every dimension, assuming that  a similar homogeneous colouring exists in dimension 1. 

We shall first introduce some useful definitions.

\begin{definition}  A \emph{complete colouring} of an $n$-complex $X$ is an assignment of a colour to every cell of $X$ in such a way that any two incident cells of a given dimension have different colours. 
\end{definition}

More precisely, if $C$ is a set, a complete colouring of $X$ (with colours in $C$) is a map $\delta\colon X\to C$
such that 
\begin{enumerate}
\item[(a)] $\delta(e)\neq \delta(f)$ if  $e\neq f\in X^k$ and $e,f\subset g$ where $g\in X^{l}$ for some $l>k$; and,
\item[(b)] $\delta(e)\neq \delta(f)$ if  $e\neq f\in X^k$ and $g\subset e,f$ where $g\in X^{l}$ for some $l<k$.
\end{enumerate}
We call $(c_0,\ldots, c_n)$-colouring of $X$  a complete colouring $\delta$ such that   $c_k=|\delta(X_k)|$.

\begin{definition} If $(X,\delta)$ and $(X',\delta')$ are completely coloured complexes, a coloured homomorphism $f\colon X\to X'$ of $n$-complexes is a coloured homomorphism if $\delta=\delta'\circ f$. 
\end{definition}

We let $\Aut(X,\delta)$ denote the group of coloured automorphisms of $X$. We are interested in the following.

\begin{definition}   An $n$-complex $X$ is \emph{completely homogeneous} if it admits a complete colouring $\delta$ such that $\Aut(X,\delta)$ acts transitively on the sets $X^k_c:=\{e\in X^k : \delta(e)=c\}$ of monochromatic cells in every dimension $k$ for every $c\in \delta(X^k)$.
\end{definition}

In this paper we are interested in the cases $n=1$ and 
$n=2$.  Namely, we use completely homogeneous $(2,m)$-colourings  in dimension 1 to construct completely homogeneous $(3,3,m)$-colourings in dimension 2, for every  $m\geq 3$.

\begin{theorem}\label{T - main homogeneous colourings} 
Let $m\geq 3$. Suppose we are given:

\begin{enumerate}
\item  three real numbers $\alpha,\beta,\gamma> 0$ such that $\alpha+\beta+\gamma=\pi$, and three metric graphs $L_1$, $L_2$, et $L_3$ of order $m$  and respective edge length $\alpha$, $\beta$, and $\gamma$, each having girth at least  $2\pi$;
\item a completely homogeneous $(2,m)$-colouring on these graphs, with a common set of colours on the edges, and such that the set of colours on vertices on  $L_k$ is  $\{1,2,3\}\setminus \{k\}$.
\end{enumerate}  
 Then there exists a unique $(3,3,m)$-coloured complex $X$, with vertex colour set $\{1,2,3\}$, for which $L_k$ is coloured isomorphic to the link of every vertex of $X$ of colour $k$. Furthermore, this complex is completely homogeneous. 
\end{theorem}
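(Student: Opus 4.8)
The plan is to build $X$ by a gluing/amalgamation procedure, dimension by dimension, with the colouring data forcing the construction at each stage, so that both existence and uniqueness fall out of the same inductive scheme. First I would fix the abstract combinatorial skeleton: since the vertex colour set is $\{1,2,3\}$ and the link $L_k$ of a colour-$k$ vertex has vertex colour set $\{1,2,3\}\setminus\{k\}$, every edge of $X$ joins vertices of two distinct colours and hence carries a well-defined ``type'' in $\{12,13,23\}$; the angles $\alpha,\beta,\gamma$ attached to $L_1,L_2,L_3$ then assign a corner angle to each vertex of each link, and the condition $\alpha+\beta+\gamma=\pi$ is exactly what makes every triangle we glue a Euclidean triangle with angles $(\alpha,\beta,\gamma)$. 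I would start from a single triangle with its three vertices coloured $1,2,3$, its three edges typed $12,13,23$ and given the corresponding lengths dual to the angles, and its face given the (unique available) face colour; the $(3,3,m)$-colouring constraints then say the $m$ edge colours and the three face colours must be used completely, i.e. no two edges sharing a vertex (equivalently, lying in a common triangle, or sharing an endpoint) have the same colour, and similarly for faces around an edge.

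The core step is the local-to-global passage. Around a colour-$k$ vertex $v$, the link must be coloured-isomorphic to $L_k$; since $L_k$ has girth $\geq 2\pi$ and total edge length... — each edge of $L_k$ has length $\alpha$, $\beta$ or $\gamma$ according to its colour/type, and a length-$2\pi$ circle in the link corresponds to a flat disc around $v$ — the girth condition guarantees the link is ``large'' in the sense of Gromov, so that the complex we assemble is CAT(0) by the link condition of \cite{BH}; but for the present theorem I only need girth $\geq 2\pi$ to ensure the gluing is consistent, i.e. that developing triangles around $v$ never forces an identification that the colouring forbids. Concretely I would: (i) show that the star of any vertex is determined up to coloured isomorphism by the isomorphism type of its link, using the completely homogeneous $(2,m)$-colouring of $L_k$ to see that the edges and faces incident to $v$ are pinned down by their colours; (ii) show that two such stars, attached along a common edge, glue in a unique coloured way, because along an edge $e$ of type $ij$ the two vertices of $e$ have links $L_i$ and $L_j$ whose vertex corresponding to $e$ has a prescribed neighbourhood, and the set of faces around $e$ is forced by the edge colours appearing in both links; (iii) iterate, showing the resulting colimit is well-defined (the gluing data satisfy the cocycle/consistency conditions) and carries a canonical $(3,3,m)$-complete colouring. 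Uniqueness then follows because at every stage the choices were forced; a coloured isomorphism between two solutions is built by transporting the forced stars.

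For complete homogeneity I would exhibit enough coloured automorphisms directly. Given two monochromatic cells of the same colour and dimension — two colour-$c$ edges, say — pick vertices $v,v'$ in them of the same colour $k$; by hypothesis $\Aut(L_k,\delta)$ acts transitively on the colour-$c$ vertices of $L_k$, so there is a coloured automorphism of the star of $v$ (resp. $v'$) realizing any prescribed coloured identification of links, and by the uniqueness established above this extends step by step over the whole complex to a coloured automorphism of $X$ carrying one cell to the other; the same argument with $k$-cells for $k=0$ (vertices), $k=1$ (edges), $k=2$ (faces) gives transitivity on each $X^k_c$. The key point making the extension work is again the girth-$\geq 2\pi$ / large-link condition, which prevents ``monodromy'' obstructions: around any cell the local automorphism group is rich enough and the global complex is simply connected at the level needed, so local automorphisms patch.

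The main obstacle I expect is step (ii)–(iii) above, namely proving that the forced local pieces actually glue into a globally consistent complex rather than running into a contradiction — i.e. verifying that the colouring constraints, the link isomorphism requirements, and the metric (Euclidean triangle) data are mutually compatible around every edge and along every chain of faces. This is where the hypotheses genuinely interact: the Sidon-type combinatorics hidden in the completely homogeneous $(2,m)$-colourings of the $L_k$ must be exactly what guarantees that as one develops triangles around an edge one returns consistently, and that the three ``angle species'' $\alpha,\beta,\gamma$ distribute around each vertex precisely as dictated by $L_k$. Once that compatibility is in hand, existence, uniqueness, and complete homogeneity all follow from the rigidity of the forced construction.
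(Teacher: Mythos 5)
Your overall strategy --- build the complex inductively from local stars, argue that the colouring data forces each extension, and deduce uniqueness and complete homogeneity from that rigidity --- is the same as the paper's, which constructs $X$ as a direct limit of balls $B_n$. But the proposal defers precisely the step that constitutes the proof. You flag as ``the main obstacle I expect'' the verification that the forced local pieces glue consistently; the paper resolves this with two concrete lemmas that are absent from your outline. First, after attaching one layer of triangles over the boundary edges of $B_n$, the link of a boundary vertex $y$ is a completely coloured metric \emph{tree} $T_y$ of diameter at most $5\pi/3$ (diameter $\le\pi$ for the part inside $B_n$ because $B_n$ is a simplicial ball, plus one edge at each end). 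Second, since $L_k$ has girth at least $2\pi$, every ball of diameter $5\pi/3$ in $L_k$ is a tree, so $T_y$ embeds in $L_k$ colour-preservingly; one then completes the link by adjoining the cells of $L_k\setminus\p_y(T_y)$. Without this quantitative comparison (partial-link diameter strictly below the girth) there is no reason the development around a vertex closes up into a copy of $L_k$ rather than wrapping onto a short cycle, and this --- not a CAT(0) or Gromov-link argument, which is irrelevant to the present statement --- is the actual role of the girth hypothesis here.

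A second, related inaccuracy: your step (ii) asserts that two stars ``glue in a unique coloured way.'' That is false as stated; the completion $\p_y\colon T_y\to L_k$ is genuinely non-unique. What the paper proves instead is that any two coloured isometric embeddings $\p_1,\p_2$ of such a tree into $L_k$ differ by a \emph{unique} coloured automorphism of $L_k$ (i.e.\ $\p_2\circ\p_1^{-1}$ extends uniquely), which uses the complete homogeneity of $(L_k,\delta_k)$: the coloured automorphism group acts freely on vertices (edges at a vertex have distinct colours) and transitively on monochromatic vertices and edges. Uniqueness of $X$ and its complete homogeneity are then deduced by transporting these automorphisms ball by ball. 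So the choices are not forced; they are all equivalent under $\Aut(L_k,\delta_k)$, and establishing that equivalence is where the ``completely homogeneous $(2,m)$-colouring'' hypothesis is actually consumed. As written, the proposal identifies the right skeleton but leaves both load-bearing lemmas unproved.
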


\begin{proof}

We let $G_k$ denote the group of coloured automorphisms of the graph $L_k$. By our assumption on $L_k$, the action of $G_k$ on $L_k$ admits two orbits of vertices and $n$ orbits of edges. 

We establish some claims first.

\begin{claim}
$L_k$ is bipartite.  
\end{claim}

\begin{proof}
This follows since the extremities of an edge contain both vertex colours. 
\end{proof}

\begin{claim} $G_k$ acts freely on the sets of vertices. 
\end{claim}

\begin{proof}
The stabilizer of a vertex is trivial since the edges incident to a vertex have pairwise distinct colours.   
\end{proof}

\begin{claim} \label{C - claim 3}
$G_k$ acts transitively on the set of edges of a single colour. 
\end{claim}

\begin{proof}
This follows since $L_k$ is bipartite and $G_k$ acts transitively on the set of vertices of a single colour.  
\end{proof}

\begin{claim} Every element stabilizing an edge of $G$ is an involution. 
\end{claim}

\begin{proof}
If $\theta$ stabilizes an edge, then $\theta^2$ fixes the end points of this edge, and therefore $\theta^2=\Id$ by the previous lemma. 
\end{proof}

Suppose we are given three completely homogeneous graphs as in (2) and let us construct $X$.

The first step is the construct a ball $B_1$ of simplicial radius 1 with links isomorphic to $L_1$ as a graph. We assign to the faces of $B_1$ the colour of the edges in $L_1$ they correspond to. We give to the center of $B_1$ colour 1, and to every vertex $y$ incident to $x$ the colour of the vertex in $L_1$ associated with the edge $[x,y]$ in $B_1$. We obtain in this way a completely coloured complex $B_1$, by associating to every edge in $B_1$ the unique pair in $\{\{1,2\},\{2,3\},\{1,3\}\}$ corresponding to the colour of its extremities.

We proceed by induction. Suppose a completely coloured simplicial ball $B_n$ is constructed in such a way that it satisfies the assumptions of the theorem in its interior vertices. We write $S_n$ for its boundary and call  $\tilde B_n$ the coloured complex obtained by adding  $q$ coloured triangles above every edge in $S_n$. 

\begin{lemma}
The link of a vertex $y$ of $S_n$ in $\tilde B_n$ is a metric tree $T_y$ which is completely coloured of diameter $\leq 5\pi/3$.
\end{lemma}

\begin{proof}  Since $B_n$ is a simplicial ball, the intersection of $T_y$ with $B_n$ has diameter  $\leq \pi$ (the diameter exactly $\pi$ at the vertices in $\tilde B_{n-1}$ and $2\pi/3$ elsewhere). Adding the faces of $\tilde B_n$, the diameter is now at most $5\pi/3$. Finally, if $y$ has colour $k$, the colour of a vertex in $T_y$ is the unique $k'$ such that $\{k,k'\}$ is the colour of the corresponding edge in $\tilde B_{n}$.  
\end{proof}

\begin{lemma}
Every completely coloured metric tree (with edges of length 1) of diameter $\leq 5\pi/3$ having the same  vertex and edge colours as $L_k$  embeds in  $L_k$ in a colour preserving way  (for every $k=1,2,3$).
\end{lemma}

\begin{proof}
Since the girth of $L_k$ is at least $2\pi$, it follows by assumption that the balls of diameter $5\pi/3$ are coloured trees, with $m$ edges adjacent to every vertex. This shows every completely coloured tree (with the same sets of colours) embeds.
\end{proof}

Let $B_{n+1}$ denote the simplicial ball obtained from $\tilde B_n$ by fixing a completion $\p_y\colon T_y\to L_k$ for every vertex $y$ of $S_n$ of colour $k$, adding the coloured cells corresponding to $L_k\setminus \p_y(T_y)$.  

\begin{claim}
The direct limit $X$ of the $B_n$ is a $(3,3,m)$-coloured complex with vertex colour set $\{1,2,3\}$, for which $L_k$ is coloured isomorphic to the link of every vertex of $X$ of colour $k$.
\end{claim}

\begin{proof}
This is clear since the requirements are local and satisfied by $B_n$ viewed as a subcomplex of $B_{n+1}$.
\end{proof}

Let us now turn the uniqueness of $X$ up to isomorphism. 

\begin{lemma} Suppose $\p_1$ and $\p_2$ are two completely coloured isometric embeddings of a metric completely coloured tree of diameter  at most  $5\pi/3$ (with edges of length $\pi/3$ in $L_k$. Then $\p_2\circ\p_1^{-1}$ extends in a unique way to a completely coloured isometric isomorphism of  $L_k$ (for every $k=1,2,3$).
\end{lemma}

\begin{proof} It follows by  Claim \ref{C - claim 3} that the balls of radius exactly $5\pi/3$ in $L_k$ are pairwise coloured isometrically isomorphic. Since the codomains of $\p
_1$ and $\p_2$ are coloured isometrically isomorphic and included in such balls, the map
 $\p_2\circ\p_1^{-1}$ extends in a unique way.
\end{proof}

In order to show that two complexes $X$ et $X'$ as described in the theorem are coloured isometrically isomorphic, one then proceeds by induction, by showing that the balls $B_n(x)$ et $B_n(x')$ of radius $n$, where $x\in X$ and $x'\in X'$ are two vertices of the same colour, are uniquely coloured isometrically isomorphic. This results from the previous lemma.  
\end{proof}

Finally, uniqueness implies that $X$ is completely homogeneous. More precisely, for any pair of vertices $(x,y)$, there exists a unique colour preserving isometric isomorphism of $X$ taking $x$ to $y$.

\section{Main theorem with signs}\label{S- signs}
 
 The third step in the construction of $G$ and $X$ is to establish a version of Theorem \ref{T - main} ``with signs''.   The signs  (defined below) appear the vertices in a fundamental domain and are necessary for the group $G$ and the complex $X$ to be well-defined in general. 
  There are three choices for the signs for a total of eight possibles constructions $(\e_1,\e_2,\e_3)$, where $\e_i \in \{\pm\}$, for every fixed choice of Sidon sequences and  bijections $\sigma_i$. 

 In the formulation with signs, the puzzle problem is slightly more elaborate, because it must account for the fact that the rings must have markings on their vertices in addition to having  markings on edges as in the situation described in Theorem \ref{T - main}.

\begin{theorem}\label{T - main with signs}
For every  integer $n\geq 2$, every triple of increasing sequences
\[
0\leq a_0^j < a_1^j<\ldots <a_n^j, \ \ j=1,2,3,
\]
every triple of integers $r_j$, $j=1,2,3$,
such that for every $j\in \{1,2,3\}$, the sequence $a_0^j < a_1^j<\ldots <a_n^j$ is a Sidon sequence modulo $r_j$, and every family of bijections
\[
\sigma_j\colon \{a_0^j,\ldots, a_n^j\} \to \{0,\ldots , n\}, \ \ j=1,2,3
\] 
and
\[
\tau_j\colon \{0,1\} \to \{1,2,3\}\setminus \{j\}, \ \ j=1,2,3
\] 
there exists a countable group $G$, acting properly discontinuous on a CAT(0) complex $X$ of dimension 2 with compact quotient,  a $G$-invariant labelling $X^2\to \{0,\ldots,n\}$ of the faces of $X$, and a $G$-invariant labelling $X^1\to \{1,2,3\}$ of the edges of $X$, such that the following holds: if a flat plane embeds in $X$, then it is the solution, with respect to the induced labelling, of a triangle ring puzzle problem with  rings of the form: 
\begin{figure}[H]
 \begin{tikzpicture}
    \tikzset{narrow/.style={inner sep=3pt}}
    \draw [thick] (0,0) circle[radius=1cm];
    \foreach \ang/\lab [evaluate=\ang as \anc using \ang+180]
        in {30/$\sigma(c)$, 90/$\sigma(b)$, 150/$\sigma(a)$, -150/$\sigma(a')$, -90/$\sigma(b')$,-30/$\sigma(c')$}
        \draw [ultra thin] (\ang+30:.9)--(\ang+30:1.1) node{} (\ang:1) node[narrow, anchor=\anc]{\lab};
        \foreach \ang/\lab [evaluate=\ang as \anc using \ang+180]
        in {30/{$l$}, 90/{$k$}, 150/{$l$}, -150/{$k$}, -90/{$l$},-30/{$k$}}
        \draw [ultra thin, anchor=\anc] (\ang+35:1.1) node{\lab};
    \end{tikzpicture}
\end{figure}
\noindent such that $\sigma=\sigma_j$, $l=\tau_j(0)$, $k=\tau_j(1)$, $j\in \{1,2,3\}$,  for every triples $(a,b,c)$ and $(a',b',c')$ of elements of $\{a_0^j,\ldots, a_n^j\}$ with equal alternating sum
\[
a-b+c=a'-b'+c' \mod r_j
\]
such that $a\neq b\neq c$, $a'\neq b'\neq c'$ and $(a,b,c)\neq (a',b',c')$.
\end{theorem}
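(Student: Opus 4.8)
The plan is to assemble the group $G$ and complex $X$ by feeding the data of the three Sidon sequences, the integers $r_j$, and the bijections $\sigma_j$, $\tau_j$ into the machinery of Theorem \ref{T - main homogeneous colourings}. First I would, for each $j\in\{1,2,3\}$, build the metric graph $L_j$ whose underlying combinatorial graph is the quotient $S_N/2N\IZ$ associated (as in \ts\ref{S - graph S}) with the sequence $a_0^j<\cdots<a_n^j$ and the modulus $r_j$; by the first theorem of \ts\ref{S - graph S}, the Sidon-modulo-$r_j$ hypothesis is exactly what guarantees girth $\geq 6$, hence, after rescaling each edge of $L_j$ to have length $\alpha,\beta,\gamma$ respectively with $\alpha+\beta+\gamma=\pi$, girth $\geq 2\pi$. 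The colouring data is supplied by the $\sigma_j$ (edge colours in $\{0,\dots,n\}$) and $\tau_j$ (vertex colours in $\{1,2,3\}\setminus\{j\}$), and bipartiteness of the quotient graph gives the two-orbit vertex structure; one checks these constitute a completely homogeneous $(2,m)$-colouring with $m=n+1$ in the sense required by hypothesis (2) of Theorem \ref{T - main homogeneous colourings}. Applying that theorem produces the unique $(3,3,m)$-coloured, completely homogeneous complex $X$, with $L_k$ realized as the link of every colour-$k$ vertex; the CAT(0) property follows from the link condition (girth $\geq 2\pi$ in all links, by construction) together with simple connectivity of $X$, which holds since $X$ is a direct limit of simplicial balls $B_n$. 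We take $G=\Aut(X,\delta)$, which acts properly discontinuously (vertex stabilizers are finite, in fact the edge-colouring forces them to be trivial on stars of appropriate size) with compact quotient (finitely many colours in each dimension), and the labellings $X^2\to\{0,\dots,n\}$, $X^1\to\{1,2,3\}$ are just the restrictions of $\delta$.

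Next I would translate the embedding statement into the ring puzzle. Suppose $\iota\colon\Pi\hookrightarrow X$ is an isometric embedding of a flat plane. For each vertex $x$ of $\Pi$, the link of $\iota(x)$ in $X$ is one of the $L_k$, and the image of the link of $x$ in $\Pi$ (a length-$2\pi$ circle) is an embedded closed geodesic of length $2\pi$ in $L_k$. Here is where the arithmetic reappears: an embedded $2\pi$-cycle in $L_k$ visits six edges whose lengths sum to $2\pi$ — since each edge has length $\alpha$, $\beta$ or $\gamma$ and $\alpha+\beta+\gamma=\pi$, such a cycle alternates between the two vertex colours six times and uses each of the three edge-lengths exactly twice — and, unwinding the definition of the graph $S_N$ (edges from even vertex $v$ with increments $2a_r^j-1$), a hexagonal cycle corresponds precisely to a relation
\[
(2a-1)-(2b-1)+(2c-1)-(2a'-1)+(2b'-1)-(2c'-1)=0 \bmod 2r_j,
\]
i.e. $a-b+c=a'-b'+c'\bmod r_j$, with the non-backtracking condition $a\neq b\neq c$, $a'\neq b'\neq c'$ and the cycle being genuinely a hexagon (not degenerate) forcing $(a,b,c)\neq(a',b',c')$. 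Reading off the edge labels via $\sigma_j$ and the vertex labels via $\tau_j$ gives exactly a ring of the stated form. Conversely the local combinatorial data recorded by these rings is, by the uniqueness clause in Theorem \ref{T - main homogeneous colourings} and completeness of the colouring, enough to reconstruct the embedded neighbourhood of each vertex; so the labelled flat planes that embed are exactly the solutions of the ring puzzle. This direction is a bit softer in the ``with signs'' version, since at this stage we only claim ``embeds $\Rightarrow$ solves the puzzle'' (the full equivalence, and the elimination of the $\tau_j$-dependence, is deferred to \ts\ref{S - Polarity}), so I would prove only the forward implication carefully and note that the puzzle rings account for \emph{all} constraints a flat plane must satisfy locally.

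The main obstacle, I expect, is the careful bookkeeping at the links: verifying that the completely homogeneous $(2,m)$-colouring hypothesis of Theorem \ref{T - main homogeneous colourings} is met requires checking that the bijections $\sigma_j$, $\tau_j$ genuinely induce a complete colouring (no two edges of the same colour at a vertex — this is automatic from the increments being distinct — and the two-colour vertex pattern compatible with bipartiteness), and then matching the length-$2\pi$ geodesic hexagons in $L_k$ with the alternating-triple relations in a way that keeps track of orientation and the placement of the two vertex colours $\tau_j(0),\tau_j(1)$ around the ring (this is the source of the ``sign redundancy'' alluded to in the section preamble — the choice of which colour sits at which of the six ring positions is not yet pinned down). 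Getting the indexing conventions on the ring diagram to line up with $a-b+c=a'-b'+c'$ rather than some reflected or rotated variant is the fiddly part; everything else is an application of the two theorems already proved.
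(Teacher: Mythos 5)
Your proposal follows essentially the same route as the paper: apply Theorem \ref{T - main homogeneous colourings} (with $\alpha=\beta=\gamma=\pi/3$) to the quotient graphs $S_{r_j}$ coloured via $\sigma_j$ and $\tau_j$, take $G$ to be the coloured automorphism group, and convert each length-$2\pi$ link cycle of an embedded flat into the relation $a-b+c=a'-b'+c'\bmod r_j$ by unwinding the increments $2a_r^j-1$. One small slip: within a single link $L_j$ all edges have the same length $\pi/3$, so a $2\pi$-cycle is a hexagon simply for that reason (it does not ``use each of the three edge-lengths exactly twice''); this does not affect the argument.
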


\begin{proof}
We apply Theorem \ref{T - main homogeneous colourings} with $\alpha=\beta=\gamma=\pi/3$. Consider the three graphs $S_{r_j}(a_0^j, a_1^j,\ldots ,a_n^j)$ for $j=1,2,3$. It is easy to verify that the colouring $\delta_j$ defined by 
\begin{enumerate}
\item $\delta_j(v)=\tau_j(v \mod 2)$ for every vertex $v$;
\item $\delta_j(e)=\sigma_j(a)$ for every edge $e$  labelled by $a\in \{a_0^j, a_1^j\ldots ,a_n^j\}$.
\end{enumerate}
is completely homogeneous. Since the sequences $a_0^j, a_1^j,\ldots ,a_n^j$ are Sidon, the graphs $S_{r_j}$ have girth at least $2\pi$. This shows Theorem \ref{T - main homogeneous colourings} applies. We let $X$ be the corresponding $(3,3,m)$-coloured complex and $G$ the group of completely coloured automorphisms of $X$. Using the maps $\sigma_j$ we obtain  a $G$-invariant labelling $X^2\to \{0,\ldots,n\}$ of the faces of $X$, and using the maps $\tau_j$ we obtain a $G$-invariant labelling $X^1\to \{1,2,3\}$ of the edges of $X$. It remains to prove the assertion on the structure of the space of flats. 

We first show that every flat which embeds in $X$ is the solution of a ring puzzle problem. 
A ring of $X$ is a cycle of length $2\pi$ included in a vertex link of $X$. Every ring is endowed with the induced vertex and edge labeling from $X$. Since $S_{r_j}(a_0^j, a_1^j,\ldots ,a_n^j)$ is bipartite, the vertex labels of a ring at a vertex of type $j$ alternate $k$ and $l$ where $l=\tau_j(0)$ and $k=\tau_j(1)$.  

We must show that the edge labelling has the given form, for some triples $(a,b,c)$ and $(a',b',c')$ of elements of $\{a_0^j,\ldots, a_n^j\}$ with equal alternating sum
\[
a-b+c=a'-b'+c' \mod r_j
\]
such that $a\neq b\neq c$, $a'\neq b'\neq c'$ and $(a,b,c)\neq (a',b',c')$. Let $R$ be a ring in $S_{r_j}(a_0^j, a_1^j,\ldots ,a_n^j)$. We fix an arbitrary base vertex $m\in \IN$ of $R$ with label $l$ and consider the two  triples $(a,b,c)$ and $(a',b',c')$ of elements of $\{a_0^j,\ldots, a_n^j\}$ describing the edge labelling of $R$. Then by the definition of $S_{r_j}(a_0^j, a_1^j,\ldots ,a_n^j)$ the following integers:
\[
m+(2a-1)-(2b-1)+(2c-1) \text{ and } m+(2a'-1)-(2b'-1)+(2c'-1)
\] 
must coincide modulo $2r_j$. This shows that
\[
2a -2b +2c =  2a' -2b' + 2c' \mod 2r_j
\]
which implies the desired result. It is clear conversely that if this condition is satisfied, then 
 the two integers:
\[
m+(2a-1)-(2b-1)+(2c-1) \text{ and } m+(2a'-1)-(2b'-1)+(2c'-1)
\] 
 coincide modulo $2r_j$ for every $m\in \IN$ with label $l$. The corresponding paths in $S_{r_j}(a_0^j, a_1^j,\ldots ,a_n^j)$ are distinct assuming that $(a,b,c)\neq (a',b',c')$, and therefore they define a cycle, which is a ring with the correct labelling. 
 \end{proof}

As mentioned above, we may encode the bijections $\tau_j$ by a sign $\e_j\in \{\pm\}$ at every vertex, where by convention $\e_j=+$ if and only if the bijection $\tau_j$ is increasing. Given the Sidon sequences and the bijections, the above theorem construct eight complexes $X_{(\e_1,\e_2,\e_3)}$. In the next section we prove these complexes are pairwise isomorphic.  

\section{Existence of polarities}\label{S - Polarity}

The fourth step concerns the automorphism group of $X:=X_{(\e_1,\e_2,\e_3)}$. We prove the existence of sufficiently many ``polarities''. This implies that
\[
X_{(\e_1,\e_2,\e_3)}\simeq X_{(\e_1',\e_2',\e_3')}
\]
for every $\e_1,\e_2,\e_3, \e_1',\e_2',\e_3'\in \{\pm\}$.

\begin{definition} Let $S$ be a bipartite graph. 
A \emph{polarity} of $S$ is an automorphism of $S$ of order 2 which permutes the vertex types of $S$.    
\end{definition}

If $e$ is an edge of $S$, we call \emph{polarity at $e$} a polarity which permutes the extremities of $e$.

In the next two lemmas we let $a_0<\ldots<a_n$ be a Sidon sequence modulo $N$ and consider the quotient graph $S_N$ of $S=S(a_0,\ldots,a_n)$ by the action of $2N\ZI$ by translations. 

We assume $S_N$ is endowed with a fixed edge labelling
\[
\sigma\colon \{a_0,\ldots, a_n\} \to \{0,\ldots , n\}
\] 
(as in Theorem \ref{T - main with signs}).

\begin{lemma}\label{L - polarity construction}
$S_N$ admits an edge label preserving polarity at every edge. 
\end{lemma}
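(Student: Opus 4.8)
The graph $S=S(a_0,\ldots,a_n)$ lives on $\IZ$, with the vertex set partitioned into even and odd vertices, and every even vertex $2m$ is joined to the odd vertices $2m+2a_r-1$ (and, symmetrically, every odd vertex $2m-1$ receives an edge from each of the even vertices $2m-2a_r$), the edge being labelled $\sigma(a_r)$. The plan is to exhibit, for a prescribed edge $e$, an explicit affine involution of $\IZ$ that swaps the two vertex parities, carries $S$ to itself preserving edge labels, descends to the quotient $S_N$, and fixes $e$ setwise. The natural candidate is a reflection $x\mapsto c-x$ about a suitable half-integer centre $c$: such a map is an involution and always interchanges even and odd integers, so it is automatically parity-reversing, which is exactly the ``permutes the vertex types'' requirement for a polarity.

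\textbf{Key steps.} First I would record the edge-adjacency of $S$ in a form symmetric in its two endpoints: the pair $\{x,y\}$ with $x$ even, $y$ odd, is an edge labelled $\sigma(a_r)$ precisely when $y-x = 2a_r-1$ \emph{or} $x-y = 2a_r - 1$, i.e.\ when $|x-y| = 2a_r-1$. This symmetric description makes it transparent that the reflection $\rho_c\colon x\mapsto c-x$, for any $c\in\IZ$ (note $c$ is an \emph{integer}, so $\rho_c$ swaps parities), sends the edge with endpoints $\{x,y\}$ to the edge with endpoints $\{c-x,c-y\}$, and $|(c-x)-(c-y)| = |x-y|$, so the label is preserved; hence $\rho_c$ is a label-preserving graph automorphism of $S$ of order $2$ interchanging the vertex types. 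Second, given the target edge $e$ with endpoints $x_0$ (even) and $y_0$ (odd), I set $c = x_0 + y_0$; then $\rho_c(x_0) = y_0$ and $\rho_c(y_0) = x_0$, so $\rho_c$ is a polarity at $e$. Third, I must pass to the quotient $S_N = S/2N\IZ$: the reflection $\rho_c$ normalizes the translation group $2N\IZ$ (indeed $\rho_c \circ t_{2Nk} = t_{-2Nk}\circ \rho_c$), so it induces a well-defined automorphism $\bar\rho_c$ of $S_N$, which is again an involution (or the identity, but it cannot be the identity since it reverses parities on a bipartite graph with both parts nonempty), still reverses the two vertex types of $S_N$, still preserves edge labels — the labelling $\sigma$ on $S_N$ is the one induced from $S$ — and still swaps the endpoints of the image of $e$. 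Finally I note $\bar\rho_c$ depends only on $c\bmod 2N$, and as $e$ ranges over all edges of $S_N$ the relevant value $c = x_0+y_0$ takes every residue needed, so there is a polarity at every edge.

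\textbf{Main obstacle.} None of this is deep; the only point that requires a little care is the interplay between ``polarity at $e$'' and passage to the quotient — one should check that $\bar\rho_c$ genuinely swaps the \emph{two lifts-to-a-single-edge-of-$S_N$} endpoints rather than matching $x_0$ with some translate of $y_0$, and that $\bar\rho_c$ has order exactly $2$ rather than being trivial, which is where bipartiteness of $S_N$ (Claim that $S_N$ is bipartite, established earlier) is used. I also want to make sure the chosen centre $c=x_0+y_0$ does not accidentally force $\rho_c$ to fix a vertex, contradicting parity-reversal: since $c$ is odd exactly when $x_0+y_0$ is odd, and $x_0$ even, $y_0$ odd forces $c$ odd, so $\rho_c$ has no fixed point in $\IZ$ and $\bar\rho_c$ has no fixed vertex in $S_N$ — consistent with it being a parity-swapping involution. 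Writing these verifications out is routine once the symmetric adjacency description is in place, so the proof is short.
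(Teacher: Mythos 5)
Your approach is the same as the paper's: construct an explicit reflection $x\mapsto c-x$ of $\IZ$ with $c$ odd, observe that it is a label-preserving automorphism of $S$ swapping the two vertex parities, and descend it to $S_N$. (The paper takes $c=2a_p-1$, producing a polarity at $[0,2a_p-1]$, and then invokes transitivity to reach every edge; you set $c=x_0+y_0$ directly, which is a minor and entirely reasonable shortcut.)

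However, the step where you justify that $\rho_c$ preserves edge labels contains a real error. You assert a ``symmetric adjacency'' rule: the pair $\{x,y\}$ with $x$ even, $y$ odd is an edge labelled $\sigma(a_r)$ precisely when $|x-y| = 2a_r - 1$. This is false. The paper's definition of $S$ issues, from each even vertex $x$, exactly the $n+1$ edges $\{x, x+2a_r-1\}$; the odd-minus-even difference equals $2a_r-1$, and it is this \emph{signed} difference that encodes the label. Since $a_0=0$, the increment $2a_0-1=-1$ is negative, so $|x-y|=2a_0-1$ is impossible; and the symmetrized condition ``$y-x=2a_r-1$ or $x-y=2a_r-1$'' would add phantom edges (for instance, $\{x,\, x-2a_r+1\}$ with $a_r\geq 2$ is not an edge of $S$) and, when $\{a_0,a_1\}=\{0,1\}$, would render the label $|x-y|=1$ ambiguous. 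The correct (and shorter) argument avoids the absolute value entirely: $\rho_c$ with $c$ odd sends the even vertex $x$ to the odd vertex $c-x$ and the odd vertex $y$ to the even vertex $c-y$, and the new odd-minus-even difference is $(c-x)-(c-y)=y-x=2a_r-1$, so the label is preserved. Relatedly, the parenthetical ``for any $c\in\IZ$\ldots $\rho_c$ swaps parities'' is wrong as stated (if $c$ is even, $\rho_c$ preserves parities); you do need $c$ odd, which is exactly what your choice $c=x_0+y_0$ (even plus odd) guarantees, as you note later. With these fixes the proof is correct and coincides with the paper's.
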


\begin{proof} For every $0\leq p\leq n$ consider the map $\p_p\colon S\to S$ taking $k$ to $-k+2a_p-1$. For every $0\leq q\leq n$ and $l$ even we have
\[
\p_p([l,l+2a_q-1])=[-l+2a_p-1,-l+2a_p-1-(2a_q-1)].
\]
This shows $\p_p$ takes edges to edges and defines an automorphism of $S$. This automorphism factorizes to an automorphism of $S_N$, which is a label preserving polarity that exchanges the end vertices of edge $[0,2a_p-1]$. By transitivity of the group of edge label preserving automorphisms, such a polarity exists at every edge of $S_N$.
\end{proof}

\begin{lemma}\label{L - tree isom link}
Let $T,T'$ be isomorphic subtrees of $S_N$ containing at least a tripod, and let  $\p_0\colon T\to T'$ be a isomorphism preserving the edge labels. Then $\p_0$ extends in a unique way to an automorphism of $S_N$ preserving the edge labels.   
\end{lemma}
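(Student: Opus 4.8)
The plan is to reduce the statement to the local ``tripod rigidity'' already used in the proof of Theorem \ref{T - main homogeneous colourings}, and then to bootstrap from the hypothesised isomorphism on $T$ to all of $S_N$ using that the graph $S_N$ is vertex-transitive (on each side of the bipartition) with trivial vertex stabilizers, via Claim \ref{C - claim 3}. Since $S_N$ has girth at least $6$ and is $(n+1)$-regular, every ball of radius $\leq 2$ in $S_N$ is a tree, so the combinatorics near any vertex is that of the regular tree of degree $n+1$ with a fixed edge-colouring; the content is purely that an edge-label-preserving partial isomorphism defined on a sufficiently large connected piece (a tripod, i.e.\ a vertex together with three of its incident edges, extended one further step) has at most one extension, and that extensions propagate.

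First I would prove uniqueness: if $\psi_1,\psi_2$ are two edge-label-preserving automorphisms of $S_N$ agreeing on a subtree containing a tripod, then $\psi:=\psi_2^{-1}\circ\psi_1$ is an edge-label-preserving automorphism fixing that tripod pointwise. It fixes the central vertex $v$ of the tripod, and since the $n+1$ edges at $v$ carry pairwise distinct labels and $\psi$ preserves labels, $\psi$ fixes every edge at $v$ and hence every neighbour of $v$ (here I use that $S_N$ is simple, which holds because the sequence is Sidon mod $N$ — this was established in the theorem of \S\ref{S - graph S}). Repeating this argument outward along the tree — at each already-fixed vertex $w$, the incident edges have distinct labels and $\psi$ fixes the one edge of $w$ pointing back toward $v$, hence permutes the others while preserving labels, hence fixes them all — shows $\psi$ fixes $S_N$ pointwise by connectedness. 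Therefore $\psi=\Id$ and $\psi_1=\psi_2$, giving uniqueness of the extension.

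Next, existence. By Claim \ref{C - claim 3}, the group of edge-label-preserving automorphisms of $S_N$ acts transitively on the set of vertices of each colour, and (by the freeness claim) this action is simply transitive on each colour class. Fix the central vertex $v_0$ of a tripod in $T$ and its image $v_0':=\p_0(v_0)$ in $T'$; as $\p_0$ preserves edge labels and each edge determines the bipartition colours of its endpoints via $\sigma$ together with the structure established in \S\ref{S - graph S}, the vertices $v_0$ and $v_0'$ have the same colour. Let $g$ be the unique edge-label-preserving automorphism of $S_N$ with $g(v_0)=v_0'$. Then $g^{-1}\circ\p_0$ is an edge-label-preserving self-isomorphism of the subtree $T$ fixing $v_0$; since the edges at $v_0$ have distinct labels it fixes each of them, so $g^{-1}\circ\p_0=\Id$ on the tripod, and then the same outward-propagation argument shows $g^{-1}\circ\p_0=\Id$ on all of $T$, i.e.\ $g$ itself extends $\p_0$. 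Combined with the uniqueness above, this is exactly the assertion.

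The main obstacle I anticipate is bookkeeping rather than conceptual: one must be careful that the hypothesis ``containing at least a tripod'' is genuinely enough to pin down the colour of $v_0$ and to start the propagation (a single edge is not enough, since its two endpoints could be swapped by a polarity as in Lemma \ref{L - polarity construction} — so the three-edge condition is doing real work), and one must phrase the outward induction so that at every step exactly one incident edge is already fixed, which is automatic for a tree rooted at $v_0$. No estimates are needed; everything reduces to the regularity of $S_N$, its simplicity (Sidon mod $N$), the bipartite structure, and the simply transitive action from Claim \ref{C - claim 3}.
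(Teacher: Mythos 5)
Your overall strategy coincides with the paper's: rigidity coming from the fact that the $n+1$ edges at each vertex of $S_N$ carry pairwise distinct labels (so a label-preserving automorphism is determined by the image of a single vertex, which the tripod pins down), combined with transitivity of the label-preserving automorphism group to produce the extension. Your uniqueness argument via outward propagation is complete and in fact more detailed than the paper's.

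There is, however, one step in your existence argument that fails as written: the claim that $v_0$ and $v_0'$ ``have the same colour'' because ``each edge determines the bipartition colours of its endpoints via $\sigma$''. It does not: every edge of $S_N$ of a given label has one even and one odd endpoint, so the edge labels of a tripod carry no information about the parity of its centre. Indeed the maps $\p_p\colon k\mapsto -k+2a_p-1$ of Lemma \ref{L - polarity construction} are edge-label-preserving automorphisms exchanging the two parity classes, so taking $T':=\p_p(T)$ gives a legitimate instance of the lemma in which $v_0$ and $v_0'$ have opposite colours, and then no translation (nor any parity-preserving automorphism) can serve as your $g$. The repair is exactly the ingredient the paper's proof leads with: when the parities differ, take $g$ to be a polarity from Lemma \ref{L - polarity construction} composed with a translation by an even integer (such a composite reaching any prescribed odd vertex exists), and then run your propagation argument unchanged to conclude that $g$ restricted to $T$ equals $\p_0$. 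With that one-sentence fix your proof is correct and is essentially the paper's argument, written out with the rigidity step made explicit.
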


\begin{proof}
If $T$ and $T'$ are adjacent tripod it suffices to choose the polarity at the common edge. If $T$ and $T'$ are arbitrary tripods, one can use transitivity of the group of automorphisms on vertices of the same type. If $T$ and $T'$ are arbitrary trees, one may fix a tripod $T_0\subset T$ can consider the unique edge label preserving automorphism of $S_N$ whose restriction to $T_0$ is $\p_0$. It takes $T$ to $T'$. Uniqueness is clear since we have required the automorphisms to preserve be the edge labelling.     
\end{proof}

\begin{theorem}
There exists an isomorphism $X_{(\e_1,\e_2,\e_3)}\simeq X_{(\e_1',\e_2',\e_3')}$ which preserve the labels on faces, for every  $\e_1,\e_2,\e_3, \e_1',\e_2',\e_3'\in \{\pm\}$.
\end{theorem}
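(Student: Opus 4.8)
The goal is to produce, for any two sign vectors $(\e_1,\e_2,\e_3)$ and $(\e_1',\e_2',\e_3')$, a label-preserving isomorphism between the corresponding complexes. Since changing one sign $\e_j$ amounts to precomposing the vertex-labelling bijection $\tau_j$ with the transposition of $\{0,1\}$, it suffices to treat the case where the two sign vectors differ in exactly one coordinate, say $j=1$, and then compose such elementary isomorphisms. So I would fix all data except $\e_1$ and construct an isomorphism $X_{(+,\e_2,\e_3)}\to X_{(-,\e_2,\e_3)}$.

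The strategy is to build the isomorphism inductively on simplicial balls, exactly as in the uniqueness part of Theorem \ref{T - main homogeneous colourings}, but now allowing the vertex-colour bijection at type-$1$ vertices to be flipped. First I would check the \emph{local} statement: the link $L_1=S_{r_1}(a_0^1,\ldots,a_n^1)$ with its edge labelling $\sigma_1$ admits a label-preserving automorphism that swaps its two vertex types. This is precisely a label-preserving polarity, whose existence is guaranteed by Lemma \ref{L - polarity construction}; composing the identification of $L_1$ with the link in $X_{(+,\e_2,\e_3)}$ with this polarity produces the identification of $L_1$ with the link in $X_{(-,\e_2,\e_3)}$. At type-$2$ and type-$3$ vertices no flip is needed, so the identity of $L_2$, $L_3$ serves. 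Then I would run the ball-by-ball extension: given a coloured isometric isomorphism $B_n(x)\to B_n(x')$ already defined, it extends uniquely over each boundary vertex because the induced tree isomorphism on that vertex's partial link extends uniquely to an automorphism of the relevant $L_k$ — by Lemma \ref{L - tree isom link} when $k=1$ (any label-preserving tree isomorphism between sub-tripods of $S_{r_1}$ extends uniquely), and by the corresponding lemma in \ts\ref{S - homogeneous} for $k=2,3$. The direct limit of these compatible isomorphisms is the desired isomorphism $X_{(+,\e_2,\e_3)}\simeq X_{(-,\e_2,\e_3)}$, and it preserves the face labels by construction since the edge labels $\sigma_j$ are never altered.

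The only subtlety — and the step I would flag as the main obstacle — is coherence of the flip across the whole complex: a single type-$1$ vertex can have its colour bijection swapped by a polarity of its link, but one must ensure the swaps at all type-$1$ vertices can be chosen simultaneously and compatibly, i.e. that applying a polarity at one vertex is consistent with the polarities forced at the neighbouring type-$1$ vertices (two steps away in $X$). This is handled by the inductive construction itself: at stage $n+1$ the extension over a new vertex is \emph{forced} and \emph{unique} given $B_n$, so there is no choice to be made and hence no possible incompatibility — the uniqueness clauses in Lemma \ref{L - tree isom link} and its analogue do all the work. One should also double-check the base case, namely that the polarity of $L_1$ is itself label-preserving and involutive (Lemma \ref{L - polarity construction}), so that the two complexes really have isomorphic balls $B_1$ around corresponding type-$1$ vertices. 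Once these points are in place, composing the elementary isomorphisms over the at most three coordinates in which the sign vectors differ yields the general statement.
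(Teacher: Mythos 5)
Your proposal is correct and follows essentially the same route as the paper: reduce by symmetry to a single sign flip, use the polarity of Lemma \ref{L - polarity construction} to identify the balls of radius $1$ around type-$1$ vertices, and then extend ball-by-ball via the unique-extension property of Lemma \ref{L - tree isom link}, taking a direct limit. Your additional remark that coherence of the flips is forced by the uniqueness clauses is a sensible elaboration of a point the paper leaves implicit.
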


\begin{proof}
We write $X=X_{(\e_1,\e_2,\e_3)}$ and $X'= X_{(\e_1',\e_2',\e_3')}$. By symmetry, it is enough to consider the case $\e_1\neq \e_1'$, $\e_2=\e_2'$,  $\e_3=\e_3'$. We let $x$ and $x'$ respectively denote a vertex of type 1 in $X$ and $X'$. By Lemma \ref{L - polarity construction}, there exists an edge label perserving isomorphism of the corresponding links. This isomorphism induces an isomorphism $\p_1\colon B_1(x)\to B_1(x')$ between the ball of radius 1, which preserves the labels on the faces. Then, it follows by Lemma \ref{L - tree isom link} that $\p_1$ extends in a unique way to label preserving isomorphisms $\p_n$ between successive balls $B_n(x)$ and $B_n(x')$. The direct limit of these maps is an isomorphism which preserve the labels of the faces.  
\end{proof}
\section{Every ring puzzle embeds in $X$}\label{S - puzzles embed}

The fifth and last step is to prove that every solution to the given ring puzzle problem defines a flat in $X$ (showing (b) $\impl$ (a) in Th.\ \ref{T - main}, which is the last  assertion that remains to be established).

\begin{theorem}
Suppose $\Pi$ is a labeled flat obtained as a solution of the puzzle problem described in Th.\ \ref{T - main}. Then $\Pi$ embeds in $X$ in a label preserving way.
\end{theorem}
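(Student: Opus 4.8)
The plan is to construct the embedding locally, vertex by vertex, using the completely homogeneous structure of $X$ established in \ts\ref{S - homogeneous} together with the characterization of rings proved in \ts\ref{S- signs}. Recall that a solution $\Pi$ of the puzzle problem is a tessellation of $\IR^2$ by labelled equilateral triangles in which, around every vertex, the induced $2\pi$-circle is one of the admissible rings; by Theorem \ref{T - main with signs} (and the polarity argument of \ts\ref{S - Polarity} removing the sign redundancy), each such ring coincides, up to coloured isometry, with a genuine ring of $X$ occurring in the link $L_k$ of some vertex of colour $k$. The point is that $\Pi$ carries, in addition to its face labels in $\{0,\ldots,n\}$, a well-defined induced edge labelling in $\{1,2,3\}$ and hence an induced colouring of vertices in $\{1,2,3\}$: indeed the colours $k,l$ on the vertices of each ring are forced, so propagating across $\Pi$ assigns a colour in $\{1,2,3\}$ to every vertex of $\Pi$ in a way consistent with the $(3,3,m)$-colouring of $X$.

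First I would set up the induction. Fix a base vertex $v_0$ of $\Pi$, of colour $k_0$ say, and a vertex $x_0$ of $X$ of the same colour; since the ring of $\Pi$ at $v_0$ is a ring of $X$ in $L_{k_0}$, choose a coloured isometric embedding of that ring into $\mathrm{lk}(x_0)$, which extends (as $L_{k_0}$ is the full link) to a label-preserving embedding $\Phi_1$ of the closed combinatorial ball $B_1^\Pi(v_0)$ into $X$. Now suppose inductively that $\Phi_n$ embeds $B_n^\Pi(v_0)$ into $X$ in a label-and-colour-preserving way, with image contained in $B_n(x_0)$. To extend over $B_{n+1}^\Pi(v_0)$, let $w$ be a vertex of $\Pi$ at combinatorial distance $n$ from $v_0$; the portion of $\mathrm{lk}_\Pi(w)$ already embedded is a completely coloured metric subtree of the ring of $\Pi$ at $w$, hence of diameter $\leq 5\pi/3$, sitting inside $\mathrm{lk}(\Phi_n(w))\cong L_{k}$ where $k$ is the colour of $w$. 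By the two link-extension lemmas in the proof of Theorem \ref{T - main homogeneous colourings} (the one asserting that such coloured trees embed in $L_k$, and the one asserting uniqueness of the extension to an automorphism of $L_k$), the embedding of this subtree extends \emph{uniquely} to a coloured isometry of the whole ring at $w$ into $\mathrm{lk}(\Phi_n(w))$; this dictates where the new faces and edges of $B_{n+1}^\Pi(v_0)$ incident to $w$ must go. Carrying this out over all such $w$ and checking the assignments agree on shared cells gives $\Phi_{n+1}$.

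The key compatibility point — and the step I expect to be the main obstacle — is exactly this agreement: a face or edge of $\Pi$ at level $n+1$ may be incident to two distinct level-$n$ vertices $w, w'$, and the extensions of $\mathrm{lk}_\Pi(w)$ and $\mathrm{lk}_\Pi(w')$ into $X$ must place that cell at the same point of $X$. I would handle this by the usual ``no monodromy'' argument on a simply connected complex: any two such cells are connected through $B_n^\Pi(v_0)$, on which $\Phi_n$ is already a fixed embedding, and the uniqueness clause of the link-extension lemma forces the two prescriptions to coincide along any path; since $\Pi\cong\IR^2$ is simply connected, there is no obstruction to globalizing, so $\Phi_{n+1}$ is well-defined. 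Equivalently, one may phrase this as: the local isometries $\mathrm{lk}_\Pi(w)\to \mathrm{lk}(\Phi_n(w))$ assemble into a single combinatorial map because $X$, being CAT(0) hence simply connected and locally isomorphic to $\Pi$ near every vertex, is the universal object with these links (the uniqueness part of Theorem \ref{T - main homogeneous colourings}).

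Finally I would pass to the direct limit $\Phi=\varinjlim \Phi_n\colon \Pi\to X$. It is a combinatorial map preserving face labels (by construction at each stage) and edge labels and vertex colours; it is a local isometry on links, hence a local isometry of the piecewise-Euclidean metrics, hence a covering map onto its image by the Cartan--Hadamard-type argument (a local isometry between complete CAT(0) spaces from a simply connected source, \cite{BH}). Since $X$ is connected and $\Pi$ is complete, $\Phi$ is surjective onto a convex subcomplex; but a local isometry from $\IR^2$ that is a covering must be injective (its image is a flat plane, and there is no room for nontrivial deck transformations in $X$ which is CAT(0)). Therefore $\Phi$ is a label-preserving isometric embedding of $\Pi$ as a flat plane in $X$, which is (a). This together with the converse (a)$\impl$(b) already proved in \ts\ref{S- signs}, and the identification of the rings via the polarity argument of \ts\ref{S - Polarity}, completes the proof of Theorem \ref{T - main}. \qedhere
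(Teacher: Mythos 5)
Your proposal follows essentially the same route as the paper: an induction on combinatorial balls of $\Pi$, extending at each boundary vertex the partial embedding of its ring into the corresponding link via a unique-extension lemma (the paper isolates this as a lemma on segments of rings, proved by transitivity plus a polarity, which is the same mechanism as the link-extension lemmas you invoke). The only real difference is that your compatibility step is more elaborate than needed --- agreement of the two prescriptions for a face sharing an edge with two boundary vertices follows directly from the complete colouring (a triangle of $X$ is determined by an edge it contains together with its label), with no monodromy argument required --- while your final covering/local-isometry argument for injectivity is a welcome addition that the paper leaves implicit.
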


 In fact, every  labeled preserving embedding of a 1-ball of $\Pi$ into $X$ extends uniquely to an embedding of $\Pi$ into $X$.

\begin{proof}
Let $\Pi$ be a coloured  flat obtained as a solution of the puzzle problem described in Th.\ \ref{T - main}, and let  $B_1$ be a 1-ball of $\Pi$. Consider a  labeled preserving embedding  $\p_1$  of $B_1$  into $X$. We show that $\p_1$ extends uniquely to an embedding of $\Pi$ into $X$.

The proof is by induction using the following lemma.

\begin{lemma}\label{L - length 4 paths embed}
Let $R$ be a ring of type $j=1,2,3$ (as described in Theorem \ref{T - main}) and let $P\subset R$ be a segment in $R$. Then every label preserving embedding $\psi \colon P\to L_j$ extends to a labeled preserving embedding $\psi'\colon R\to L_j$.   
\end{lemma}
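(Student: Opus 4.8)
The plan is to reduce the claim to the local structure of the graph $L_j = S_{r_j}(a_0^j,\ldots,a_n^j)$, exploiting that it has girth at least $2\pi$ (equivalently, girth $\geq 6$ as a combinatorial graph with unit edge lengths, after rescaling), is bipartite, and carries the completely homogeneous edge labelling $\sigma_j$. Recall that a ring $R$ of type $j$ has length $2\pi$, so it consists of six edges, and a segment $P\subset R$ is a subpath of length at most $5\pi/3$, i.e.\ of at most five edges. First I would observe that since $P$ has length $<2\pi$ and $L_j$ has girth $\geq 2\pi$, $P$ is isometrically embedded as a subtree (in fact a path) of $L_j$; and the edge labelling of $R$ — which by Theorem \ref{T - main with signs} is determined by a pair of triples $(a,b,c)$, $(a',b',c')$ of elements of $\{a_0^j,\ldots,a_n^j\}$ with equal alternating sum mod $r_j$ — restricts to a prescribed labelling on $P$.

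The key step is then a case analysis on $P$ according to which sub-segment of the ring it is, combined with the extension lemmas already proved in \ts\ref{S - Polarity}. If $P$ is all of $R$ there is nothing to do. If $P$ is a path of length $\geq 3$ (containing a tripod), then by Lemma \ref{L - tree isom link} any label-preserving embedding $\psi\colon P\to L_j$ extends uniquely to a label-preserving automorphism of $L_j$; in particular, composing with the original inclusion $R\hookrightarrow L_j$ gives the required $\psi'\colon R\to L_j$, since the ring $R$ is a concrete length-$2\pi$ cycle in $L_j$ through $P$ and its image under the automorphism is again such a cycle with the same edge labels, hence a ring of the prescribed type. For the short cases ($P$ a single edge, a path of length $2$, or the empty/degenerate cases), I would first extend $P$ inside $R$ to a tripod $P_0$ with $P\subset P_0\subset R$: this is possible because $R$ has six edges. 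Then it suffices to produce a label-preserving embedding $\psi_0\colon P_0\to L_j$ extending $\psi$, after which the previous case applies. When $P$ is a single edge $e$, such an extension exists because the link of each endpoint of $\psi(e)$ in $L_j$ realizes every edge label (the colouring is a complete $(2,m)$-colouring), so we can match the two further edges of the tripod $P_0$ to edges of $L_j$ with the correct labels incident to $\psi(e)$; the girth condition guarantees these choices land in a tree and hence give an embedding. The length-$2$ case is similar, using the label-transitivity from Claim \ref{C - claim 3} to pin down the third edge.

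I expect the main obstacle to be bookkeeping: making sure that the edge of $R$ we are trying to hit in $L_j$ really does carry the label dictated by the ring-puzzle data, i.e.\ that the triples $(a,b,c)$ and $(a',b',c')$ encoding $R$ produce a genuine cycle of length $2\pi$ in $L_j$ and that this cycle is the one we reach after applying the extension automorphism. This is exactly the content of the converse direction established in the proof of Theorem \ref{T - main with signs}: given the equality of alternating sums modulo $r_j$ and $(a,b,c)\neq(a',b',c')$, the two length-$3$ paths in $S_{r_j}$ issued from a common base vertex with label $l$ close up into a ring with the prescribed labelling. I would invoke that fact directly, so that the only work left is the elementary extension-of-embeddings argument above. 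Uniqueness of the extension $\psi'$ (when $P$ contains a tripod) is immediate from the uniqueness clause of Lemma \ref{L - tree isom link}; in the short cases $\psi'$ need not be unique, but existence is all the induction in the surrounding theorem requires.
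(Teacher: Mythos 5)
Your overall strategy coincides with the paper's: realize $R$ as an actual cycle $\psi_0\colon R\to L_j$ (which, as you correctly note, is exactly the converse direction established in the proof of Theorem \ref{T - main with signs}), and then move $\psi_0$ by a label-preserving automorphism of $L_j$ so that it agrees with $\psi$ on $P$. However, the specific mechanism you use to produce that automorphism does not work as written. Lemma \ref{L - tree isom link} applies only to subtrees \emph{containing a tripod}, i.e.\ a vertex of degree $3$ together with its three incident edges. A path of length $\geq 3$ does \emph{not} contain a tripod (every vertex of a path has degree at most $2$), so you cannot apply Lemma \ref{L - tree isom link} directly to $\psi\circ\psi_0^{-1}$ on $\psi_0(P)$. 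Worse, your fallback for the short cases --- ``extend $P$ inside $R$ to a tripod $P_0$ with $P\subset P_0\subset R$'' --- is impossible: $R$ is a cycle, so it contains no vertex of degree $3$ and hence no tripod at all.

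The repair is exactly what the paper does, and it avoids Lemma \ref{L - tree isom link} altogether. By Claim \ref{C - claim 3} there is an edge-label-preserving automorphism $\theta$ of $L_j$ taking $\psi_0(e)$ to $\psi(e)$ for the initial edge $e$ of $P$; composing if necessary with a polarity at $\psi(e)$ (Lemma \ref{L - polarity construction}) one may assume $\theta$ also matches the orientation, i.e.\ sends $\psi_0(f)$ to $\psi(f)$ for the edge $f$ of $P$ adjacent to $e$. Since the colouring is complete, at every vertex of $L_j$ there is a \emph{unique} incident edge with a given label, so agreement on two consecutive edges propagates along the whole path: $\theta\circ\psi_0$ agrees with $\psi$ on all of $P$, and $\psi':=\theta\circ\psi_0$ is the desired extension. (Alternatively, if you insist on routing through Lemma \ref{L - tree isom link}, you must first enlarge $\psi_0(P)$ and $\psi(P)$ by attaching a matching third edge at an interior vertex \emph{inside $L_j$}, not inside $R$, so that the subtrees genuinely contain a tripod; this requires $P$ to have length at least $2$ and an extra argument that the enlarged map is still label-preserving.) Finally, note that uniqueness of $\psi'$ is not claimed in the lemma and indeed can fail for very short $P$; only existence is needed for the induction.
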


\begin{proof}[Proof of Lemma \ref{L - length 4 paths embed}]
By assumption there exists an edge label preserving embedding $\psi_0\colon R\to L_j$. We let $e$ denote the initial edge in $P$. Since $L_j$ is completely transitive, there exists an edge label preserving automorphism $\p \colon L_j\to L_j$ taking $\psi_0(e)$ to $\psi(e)$. Up to composing with a polarity of  $L_j$ fixing $\psi(e)$, we may assume that $\theta$ takes $\psi_0(f)$ to $\psi(f)$, where $f$ is the edge adjacent to $e$ in $P$ (assuming that $P$ has length at least 2). It follows by the fact that $\theta$ is labeled preserving that $\theta$ takes $\psi_0(P)$ to $\psi(P)$. Then $\psi':=\theta\circ \psi_0$ is a labeled preserving embedding extending $\psi$.  
\end{proof}

Suppose $\p_n$ is an embedding the $n$-ball of $\Pi$ concentric to $B_1$ into $X$ and let us show $\p_n$ can be extended to an embedding  of the $(n+1)$-ball $B_{n+1}$. For every vertex $x$ of $S_n=\del B_n$, we let $R_x$ denote the ring at $x$ in $\Pi$ and $P_x\subset R_x$ denote the path in $R_x$ associated to $B_n$. Then $\p_n$ induces a labeled preserving embedding $\psi_x\colon P_x\to L_{\p_n(x)}$. 
  By the previous lemma, $\psi_x$ extends uniquely to a label preserving embedding $\psi_x'\colon R_x\to L_{\p_n(x)}$. We let $B_{n+1}$ be the union of 
  $B _n$ and $\bigcup_{x\in S_n}  [\psi_x'(R_x)]$, where $[\psi_x'(R_x)]$ is the 1-disk in $X$ corresponding to $\psi_x'(R_x)$. Since the maps $\psi_x'$ are colour preserving, this is a disk of radius $n+1$ in $X$ which extends $B_n$. Thus, $\p_n$ extends in a unique way.
\end{proof}

\section{Examples and applications}\label{S - examples} 

\subsection{Mian--Chowla complexes}

Let $(a_n)_{n\geq 0}$ denote the Mian--Chowla sequence (obtained from $a_0=0$ by the greedy algorithm).

Theorem \ref{T - main} associates groups and complexes to the truncated Mian--Chowla sequence $a_0,\ldots, a_n$ and integer $N$ such that  $a_0,\ldots, a_n$ is a Sidon sequence modulo $N$. In general, this holds for every $N$ large enough:

\begin{proposition}
Let $a_0,\ldots, a_n$ be a Sidon sequence. Then $N_0=2a_n+1$ is the smallest integer  with the property that $a_0,\ldots, a_n$ Sidon sequence modulo $N$ for every $N\geq N_0$.
\end{proposition}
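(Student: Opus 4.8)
The plan is to analyze the condition "$a_0,\ldots,a_n$ is a Sidon sequence modulo $N$" directly in terms of the alternating-sum characterization already established in \ts\ref{S - graph S}, and to separate the claim into two parts: first, that $N\geq N_0:=2a_n+1$ suffices, and second, that $N_0-1=2a_n$ does not, so that $N_0$ is the smallest threshold with the stated monotone property. For the sufficiency direction I would use the equivalent form recorded in the proof of the first theorem of \ts\ref{S - graph S}: the sequence is Sidon modulo $N$ if and only if $a-b+c-d\not\equiv 0 \pmod N$ for every quadruple with $a\neq b\neq c\neq d\neq a$ chosen from $\{a_0,\ldots,a_n\}$. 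Since each $a_i$ lies in $[0,a_n]$, the integer $a-b+c-d$ lies in the interval $[-2a_n,2a_n]$, which has length $4a_n$ but contains only the $4a_n+1$ integers strictly between $-N_0$ and $N_0$ when $N_0=2a_n+1$. The key point is that $a-b+c-d$ is a genuine nonzero integer (nonzero because the original finite sequence is Sidon, i.e.\ Sidon modulo every sufficiently large $N$, equivalently because $a+c\neq b+d$ as honest integers), and its absolute value is at most $2a_n < N$; hence it cannot be divisible by $N$. This gives the Sidon-mod-$N$ property for every $N\geq N_0$.

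For the second part I must exhibit, for each $N<N_0$, i.e.\ $N\leq 2a_n$, a failure of the Sidon-mod-$N$ condition — but wait, the proposition only asserts that $N_0$ is the smallest integer such that the property holds \emph{for every} $N\geq N_0$; so it suffices to show the property fails at $N=N_0-1=2a_n$ itself. Here I would produce the explicit obstruction: take $d=a_0=0$, $b=a_0=0$ is not allowed since we need $b\neq d$; instead consider the quadruple realizing the extreme value. Take $a=c=a_n$ and $b=d=a_0=0$; then $a\neq b$, $b\neq c$, $c\neq d$, $d\neq a$ all hold (as $a_n\neq 0$), and $a-b+c-d = 2a_n \equiv 0 \pmod{2a_n}$. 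Thus the sequence is \emph{not} Sidon modulo $2a_n$, which shows $N_0-1$ fails and therefore $N_0$ is optimal. I should double-check the degenerate constraints: the condition $a\neq b\neq c\neq d\neq a$ only forbids \emph{consecutive} equalities in the cyclic pattern, and $a=c$, $b=d$ is permitted, exactly as used in the repeated-edge argument of \ts\ref{S - graph S}.

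The only genuine subtlety — and the step I would be most careful about — is the interplay between the sum-of-pairs formulation and the alternating-sum-of-quadruples formulation, specifically the role of the degenerate cases. The equivalence "sums $a_i+a_j$ ($i\leq j$) distinct mod $N$" $\iff$ "$a-b+c-d\not\equiv 0$ for $a\neq b\neq c\neq d\neq a$" needs the observation that the potentially problematic identity $a_i+a_i \equiv a_j+a_j$ corresponds to the degenerate quadruple $(a_i,a_j,a_i,a_j)$, which is permitted by the constraint pattern; this is precisely why the naive bound would want $N>a_i+a_j$ for the relevant pair, and the worst case $i=j$ with $a_i=a_n$ forces $N>2a_n$, i.e.\ $N\geq 2a_n+1$. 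Conversely one checks that no quadruple from $[0,a_n]$ can have alternating sum a nonzero multiple of $N$ once $N\geq 2a_n+1$, since the alternating sum is bounded by $2a_n$ in absolute value. Assembling these gives both the sufficiency of $N_0$ and, via the explicit quadruple $(a_n,0,a_n,0)$, the failure at $N_0-1$, completing the proof.
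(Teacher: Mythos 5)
Your proof is correct and follows essentially the same route as the paper: the lower bound comes from the degenerate coincidence $a_n+a_n\equiv a_0+a_0 \pmod{2a_n}$ (your quadruple $(a_n,0,a_n,0)$ is exactly this), and the upper bound from the fact that the relevant difference is a nonzero integer of absolute value at most $2a_n$, hence not divisible by any $N\geq 2a_n+1$. The only cosmetic difference is that you phrase the argument via the alternating-sum characterization of \ts\ref{S - graph S} while the paper works directly with sums of pairs; these are equivalent, so no further comment is needed.
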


\begin{proof}
It is clear that $N_0\geq 2a_n+1$, since $a_n+a_n=a_0+a_0\mod 2a_n$.
Conversely, if $a_1,\ldots, a_n$ is not Sidon modulo $N$, then there exist $a_i,a_j,a_p,a_q\in \{a_1,\ldots, a_n\}$ such that 
\[
a_i+a_j\neq a_p+a_q
\]
and
\[
N \text{ divides } a_i+a_j-a_p-a_q.
\]
Since $0\leq a_i+a_j, a_p+a_q\leq 2a_n$, this fails if $N\geq 2a_n+1$.
\end{proof}

While the proposition shows the general bound $2a_n+1$ is sharp, we note that there exist Sidon sequences which are Sidon sequence modulo $N$ for some values of $N$ which are smaller than $N_0$. Consider for example the Sidon sequence $0,2,7$. Then $N_0=15$, and it is easily verified that $0,2,7$ is a Sidon sequence modulo $N$, for several values of $N<N_0$ (e.g., $N=8$).

\begin{question}
Given a Sidon sequence $a_0,\ldots a_n$, what is the value of 
\[
N_{00}(a_0,\ldots a_n):=\min\{N : a_0,\ldots a_n\text{ is a Sidon sequence modulo } N\}?
\]
\end{question}
In some cases $N_{00}=N_0$. Consider for example the Sidon sequence $0,1,3$. Then $N_0=7$, and it is easily verified that the Mian--Chowla sequence $0,1,3$ fails to be a Sidon sequence modulo $N$, for every $N\leq 6$. On the other hand, it is not difficult to check, for example, that the Mian--Chowla sequence $0,1,3,7,20$ is a Sidon sequence modulo 35.

For every $n\geq 2$ we call Mian--Chowla complex the CAT(0) 2-complex $X_n$ associated by Theorem \ref{T - main}with the following data:
\begin{enumerate}
\item $a_0^j,\ldots, a_n^j$ is the truncated Mian--Chowla sequence;
\item $\sigma_j\colon \{a_0^j,\ldots, a_n^j\}\to\{0,\ldots, n\}$ is the increasing bijection; 
\item $N_j= N_{00}(a_0^j,\ldots, a_n^j)$;
\end{enumerate}
for every $j=1,2,3$.

Due to the symmetry in these data, we have:

\begin{proposition}
The automorphism groups of the Mian--Chowla complexes are vertex transitive. 
\end{proposition}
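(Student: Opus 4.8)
The plan is to combine the complete homogeneity of $X_n$ furnished by Theorem \ref{T - main homogeneous colourings} with additional automorphisms that permute the three vertex colours. Write $\delta$ for the $(3,3,m)$-colouring of $X:=X_n$ and $G=\Aut(X,\delta)$. Complete homogeneity says that $G$ acts transitively on each monochromatic vertex set $X^0_k=\{v\in X^0:\delta(v)=k\}$, $k\in\{1,2,3\}$; hence $X$ has at most three vertex orbits under $G$, one per colour, and it suffices to exhibit simplicial automorphisms of $X$ inducing a transitive group of permutations of $\{1,2,3\}$ on these three classes — for instance the transpositions $(1\,2)$ and $(2\,3)$.

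Fix a transposition $\pi\in S_3$, say $\pi=(1\,2)$, and let $\pi_*$ act on the colour palette of $(X,\delta)$ by applying $\pi$ to vertex colours, applying $\pi$ to edge colours (which are $2$-subsets of $\{1,2,3\}$), and fixing the face colours; put $\delta^\pi:=\pi_*\circ\delta$, so that $(X,\delta^\pi)$ is again a $(3,3,m)$-coloured complex. I claim that $(X,\delta^\pi)$ has the very same prescribed links $L_1,L_2,L_3$ as $(X,\delta)$. Indeed, a vertex $v$ satisfies $\delta^\pi(v)=k$ iff $\delta(v)=\pi(k)$, so the link of such a $v$ is coloured isomorphic to $\pi_*\bigl(L_{\pi(k)}\bigr)$, and one checks $\pi_*\bigl(L_{\pi(k)}\bigr)\cong L_k$ as coloured graphs: its vertex-colour set is $\pi(\{1,2,3\}\setminus\{\pi(k)\})=\{1,2,3\}\setminus\{k\}$ since $\pi$ is an involution, so the colour sets match; and because, by the very definition of the Mian--Chowla complex, the three triples $(a_0^j,\dots,a_n^j)$, the moduli $N_j=N_{00}$ and the bijections $\sigma_j$ all coincide, the graphs $L_1,L_2,L_3$ share one and the same underlying graph $S_N$ and one and the same edge colouring. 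Hence $\pi_*\bigl(L_{\pi(k)}\bigr)$ and $L_k$ can differ only in the way the two vertex colours are distributed over the two bipartition classes of $S_N$, and Lemma \ref{L - polarity construction} (an edge-label preserving polarity, which exchanges those two classes) identifies them.

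Granting the claim, $(X,\delta^\pi)$ satisfies the hypotheses of Theorem \ref{T - main homogeneous colourings} with the graphs $L_1,L_2,L_3$, so by the uniqueness clause there is a simplicial automorphism $\phi_\pi$ of $X$ with $\delta\circ\phi_\pi=\delta^\pi$, that is, $\delta(\phi_\pi(v))=\pi(\delta(v))$ for every vertex $v$; in particular $\phi_\pi(X^0_1)=X^0_2$. Running the same argument with $\pi=(2\,3)$ produces $\phi'$ with $\phi'(X^0_2)=X^0_3$. The subgroup of $\Aut(X)$ generated by $G$, $\phi_{(1\,2)}$ and $\phi_{(2\,3)}$ is then transitive on $X^0=X^0_1\cup X^0_2\cup X^0_3$, which is the assertion. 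I expect the only real work to be the middle step — checking that the recoloured complex $(X,\delta^\pi)$ still carries $L_1,L_2,L_3$ as its vertex links — which is precisely where the symmetry built into the Mian--Chowla data and the existence of polarities (Lemma \ref{L - polarity construction}) are used; everything else is a formal consequence of the complete homogeneity and uniqueness already established in Theorem \ref{T - main homogeneous colourings}.
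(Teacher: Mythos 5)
Your proof is correct and is essentially the argument the paper intends: the paper offers no written proof beyond the remark ``due to the symmetry in these data,'' and your argument is precisely the careful working-out of that symmetry — the coincidence of the three sequences, moduli and bijections makes the recoloured complex $(X,\delta^\pi)$ satisfy the same link prescription, the polarity of Lemma \ref{L - polarity construction} reconciles the vertex-colour distribution on the common bipartite graph, and the uniqueness clause of Theorem \ref{T - main homogeneous colourings} then supplies the type-permuting automorphisms. No gaps; this matches the paper's approach, just with the details filled in.
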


It would be interesting to study the geometric structure of the Mian--Chowla complexes $X_n$, and solve the associated ring puzzle problems. We observe that the Mian--Chowla complex $X_2$ is in fact a Bruhat-Tits building.

\subsection{Moebius--Kantor complexes}\label{S - MK odd} We say that a CAT(0) 2-complex $X$ is a \emph{Moebius--Kantor complex} if its faces are equilateral triangles and its vertex links are isomorphic to the Moebius--Kantor graph (namely, the unique bipartite cubic symmetric graph on 16 vertices).

Consider the Sidon sequence $a_0=0$, $a_1=1$, $a_2=3$ (of length 3) modulo $N=8$, and the  bijections 
\[
\sigma_i\colon \{0,1,3\}\to \{0,1,2\}, \ \ i=1,2,3
\]
given by $\sigma_1(0):=0$, $\sigma_1(1):=1$, $\sigma_1(3):=2$, and $\sigma_i:=(0\ 1\ 2)^i\sigma_1$ for $i=2,3$. Applying Theorem \ref{T - main}, we find a group $G$ and a CAT(0) complex $X$ with the indicated properties.

We claim:

\begin{proposition}\label{P - X is MK}
	$X$ is a Moebius--Kantor complex.
\end{proposition}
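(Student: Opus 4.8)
The plan is to identify the link $L$ of every vertex of $X$ explicitly and check it is the Moebius--Kantor graph. By Theorem~\ref{T - main homogeneous colourings} (as invoked in the proof of Theorem~\ref{T - main with signs}), the link of a vertex of type $j$ is the graph $S_{N}(a_0,a_1,a_2)$ with $N=8$ and $(a_0,a_1,a_2)=(0,1,3)$, endowed with the edge colouring $\delta_j(e)=\sigma_j(a)$ and vertex colouring $\delta_j(v)=\tau_j(v\bmod 2)$. So the first step is simply to draw $S_8(0,1,3)$: it is the graph on $\IZ/16\IZ$ (the quotient of $\IZ$ by $2N\IZ=16\IZ$) in which every even vertex $2k$ is joined to the three odd vertices $2k+(2a_r-1)$, i.e.\ to $2k-1$, $2k+1$, and $2k+5$ (mod $16$). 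This is a bipartite cubic graph on $16$ vertices.

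Next I would verify that this graph really is the Moebius--Kantor graph. Since the Moebius--Kantor graph is characterized as the \emph{unique} bipartite cubic symmetric (i.e.\ arc-transitive) graph on $16$ vertices, it suffices to check two things: (i) $S_8(0,1,3)$ has girth $6$ (equivalently, it has no $4$-cycles, which is exactly the content of the Theorem in \ts\ref{S - graph S} applied to the Sidon sequence $0,1,3$ modulo $8$ — and one also notes it is not the complete bipartite-ish graph $K_{3,3}$-free obstruction, girth being $\geq 6$ and the graph being finite cubic on $16$ vertices forces girth exactly $6$); and (ii) the graph is symmetric. For (ii) one can exhibit enough automorphisms: the translations $x\mapsto x+2$ act as a cyclic group of order $8$ on the even (resp.\ odd) vertices, and the polarity maps $\varphi_p\colon x\mapsto -x+2a_p-1$ from Lemma~\ref{L - polarity construction} swap the two sides; together with the edge-transitivity furnished by Claim~\ref{C - claim 3}-type arguments these generate a subgroup acting transitively on arcs. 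Alternatively, and more cleanly, one recognizes $S_8(0,1,3)$ directly as the generalized Petersen graph $GP(8,3)$ — the standard model of the Moebius--Kantor graph — by matching the two $8$-cycles and the spokes.

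The remaining point is that $X$ itself is CAT(0) with equilateral-triangle faces, but this is immediate from Theorem~\ref{T - main}: the faces of $X$ are equilateral triangles by construction (we applied the homogeneous-colouring theorem with $\alpha=\beta=\gamma=\pi/3$), and $X$ is CAT(0) because the vertex links have girth $\geq 2\pi$, which holds precisely because $0,1,3$ is a Sidon sequence modulo $8$. Combining these observations with the link identification above gives the claim: $X$ is a CAT(0) $2$-complex whose faces are equilateral triangles and whose vertex links are all isomorphic to the Moebius--Kantor graph, i.e.\ a Moebius--Kantor complex.

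The main obstacle is the graph-isomorphism step: one must be careful that the specific adjacency pattern $2k\sim 2k-1,2k+1,2k+5\pmod{16}$ coming from the increments $2a_r-1\in\{-1,1,5\}$ genuinely produces the Moebius--Kantor graph and not some other cubic bipartite graph on $16$ vertices. The cleanest route is to pin down that it is $GP(8,3)$ — relabel the even vertices $0,2,\dots,14$ as an outer $8$-cycle (via the two generators $\pm1$, which together give $2k\sim 2k\pm 2$ through odd vertices) and the structure via increment $5$ as the inner $\{i,i+3\}$ chords — and then cite the known fact $GP(8,3)\cong$ Moebius--Kantor graph; uniqueness of the bipartite cubic symmetric graph on $16$ vertices is a convenient sanity check but not strictly needed if the $GP(8,3)$ identification is done carefully.
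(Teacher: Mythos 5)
Your proposal is correct and follows essentially the same route as the paper: identify the link as $S_8(0,1,3)$, i.e.\ the graph on $\IZ/16\IZ$ with edges $[n,n+1]$ for all $n$ and $[n,n+5]$ for $n$ even, note that $X$ is CAT(0) with equilateral triangle faces by Theorem \ref{T - main}, and verify that this graph is the Moebius--Kantor graph. The paper leaves that last verification as "easy to verify," whereas you supply it explicitly (via the $GP(8,3)$/Levi-graph identification), which is a welcome but not divergent elaboration.
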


\begin{proof} By Theorem \ref{T - main}, the space $X$ is a CAT(0) space with equilateral triangle faces. 
The links in $X$ are determined by the Sidon sequence. By definition, they are isomorphic to the graph with vertex set $\ZI/16\ZI$ and edge set $[n,n+1]$ for every $n$ and $[n,n+5]$ for every $n$ even. It is easy to verify that this graph is the Moebius--Kantor graph.	 
\end{proof}

Thus, Th.\ \ref{T - main} provides a new construction method for Moebius--Kantor complexes. Here we shall describe here some properties of $X$, and in particular motivate our choice of  bijections.

We call \emph{root} of $X$ an isometric embedding $\alpha$ of a path of length $\pi$ in a link of $X$, such that $\alpha(0)$ is a vertex. Thus, the image of every root consists of three edges, which we shall endow with the induced labeling in $\{0,1,2\}$. We say that $\alpha$ is a root of rank 2 if there exist precisely two roots distinct from $\alpha$ with the same end points. This definition is a particular case of the notion of rank of a root in a CAT(0) 2-complex---and we shall refer the interested reader to \cite[\ts 4]{chambers} for this generalization.

We write $S_i$ for the link in $X$ at a vertex of type $i$. For the complex $X$, the rank of a root in $S_i$ is a function of its labels; the following statement is the most relevant for our purpose.

\begin{lemma}
	Let $\alpha$ be a root in $S_i$,  $i=1,2,3$, with consecutive labels $b$, $a$, and $b$ where $a\neq b$. Then $\alpha$ is a root of rank 2 if and only if $a=\sigma_i(0)$.
\end{lemma}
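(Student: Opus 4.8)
The claim is a local statement about the link $S_i$, which by construction is the graph $S_{N}(a_0,a_1,a_2)$ for the Sidon sequence $0,1,3$ modulo $8$ (so $S_i$ is the Moebius--Kantor graph, and in particular $m=3$, each vertex having three incident edges labelled $0,1,2$). The first thing I would do is translate the notion ``root of rank $2$'' into an arithmetic statement in $S_{N}(a_0,a_1,a_2)$. A root $\alpha$ is an isometric path of length $\pi$, i.e.\ a combinatorial path of $3$ consecutive edges in $S_i$ starting at a vertex $v$; writing $v$ as a residue $m\in\IZ/2N\IZ$, a path with consecutive labels $b,a,b$ corresponds to the sequence of vertices $m \to m+(2\tilde b-1) \to m+(2\tilde b-1)-(2\tilde a-1) \to m+(2\tilde b-1)-(2\tilde a-1)+(2\tilde b-1)$, where $\tilde a=\sigma_i^{-1}(a)$ and $\tilde b=\sigma_i^{-1}(b)$ are the elements of $\{0,1,3\}$ corresponding to the labels. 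The endpoint is therefore $m + 2(2\tilde b - \tilde a) - 1 \pmod{2N}$. A second root $\alpha'$ with the same endpoints, with labels $b',a',b'$ (necessarily of this palindromic form, since the two endpoints of any edge of $S_i$ are of opposite type and there are exactly $m=3$ edges at each vertex, so the label at the boundary is forced on both sides), corresponds to a triple $(\tilde a', \tilde b')$ with $m + 2(2\tilde b' - \tilde a') - 1 \equiv m + 2(2\tilde b - \tilde a) - 1 \pmod{2N}$, i.e.\ $2\tilde b' - \tilde a' \equiv 2\tilde b - \tilde a \pmod N$.

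Next I would count, for the specific sequence $0,1,3$ modulo $N=8$, how many distinct palindromic roots share a given endpoint pair, as a function of the middle label $a$. So I would enumerate all pairs $(\tilde b,\tilde a)$ with $\tilde a\neq\tilde b$ drawn from $\{0,1,3\}$ (there are $6$ of them), compute $2\tilde b-\tilde a \bmod 8$ for each, and group them by value: the rank-$2$ condition ``there are precisely two other roots with the same endpoints'' means exactly that the value $2\tilde b-\tilde a \bmod 8$ is attained by exactly three of the six pairs. The arithmetic is quick: the six values of $2\tilde b - \tilde a$ are, for $(\tilde b,\tilde a) = (1,0),(3,0),(0,1),(3,1),(0,3),(1,3)$, equal to $2, 6, -1, 5, -3, -1$, i.e.\ modulo $8$: $2, 6, 7, 5, 5, 7$. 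So the value $7$ occurs twice (from $(0,1)$ and $(1,3)$, i.e.\ $a=1=\sigma_i(0)$ in both) and the value $5$ occurs twice (from $(3,1)$ and $(0,3)$). Hence I must be more careful: rank $2$ for a root means there are two \emph{other} roots, hence three in total, with the same endpoints. I would recompute grouping by \emph{middle label $a$} as well, because the middle label is part of the data of $\alpha$; what one really wants is: fixing the endpoints (equivalently fixing the residue class of $2\tilde b - \tilde a$) and the middle label $a$, how many roots are there. Tracking this correctly — distinguishing ``three roots with the same endpoints'' from ``three roots with the same endpoints \emph{and} the same middle label'' — is the step I expect to require genuine care.

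Concretely, I would argue as follows once the bookkeeping is set up. Fix a root $\alpha$ with labels $b,a,b$, $a\neq b$, and let $v_\alpha := 2\tilde b - \tilde a \bmod N$ determine its endpoint displacement. A competing root $\alpha'$ with labels $b',a',b'$ has the same endpoints iff $2\tilde b' - \tilde a' \equiv v_\alpha$. Since $|\{0,1,3\}|=3$, the Sidon property modulo $N$ (which guarantees the links have girth $\ge 2\pi$, i.e.\ no short cycles, cf.\ the first theorem of the excerpt) pins down that the map $(\tilde b,\tilde a)\mapsto 2\tilde b - \tilde a \bmod N$ is injective on pairs with a \emph{fixed difference pattern}; one then just reads off, from the $6$-element table above, the fibre sizes. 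I would present the table, observe that $\alpha$ has rank $2$ exactly when its fibre (among the palindromic roots sharing its endpoints) has size $3$, and check directly that this happens iff $a = \sigma_i(0)$: indeed $\sigma_i(0)$ is the label whose preimage in $\{0,1,3\}$ is $0 = a_0$, and setting $\tilde a = 0$ makes $v = 2\tilde b$ take the two values $2,6$ each only once, whereas $\tilde a = a_1 = 1$ and $\tilde a = a_2 = 3$ conspire so that $\{2\tilde b - \tilde a\}$ collides — this is precisely where the alternating-sum-of-triples arithmetic ($a - b + c = a' - b' + c' \bmod r$) of Theorem \ref{T - main} enters. The main obstacle is thus purely combinatorial: correctly matching the geometric ``rank $2$'' count to the right arithmetic fibre, and verifying that the bijection $\sigma_i$ was chosen so that $\sigma_i(0)$ — and no other label — is the distinguished one; this is exactly the point the paper flags as motivating the choice of bijections, so I would expect the write-up to lean on the explicit Moebius--Kantor structure (edges $[n,n+1]$ and $[n,n+5]$ for $n$ even on $\IZ/16\IZ$) to make the three-coincident-roots picture transparent.
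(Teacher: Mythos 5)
Your overall strategy---translating the rank of a root into counting triples $(x,y,z)$ drawn from $\{0,1,3\}$ with $x\neq y\neq z$ and a prescribed alternating sum $x-y+z$ modulo $8$---is sound, and it is essentially the arithmetic underlying the paper's (terser) argument. But there is a genuine error that derails the count: you assert that any competing root with the same endpoints as $\alpha$ is ``necessarily of palindromic form $b',a',b'$''. This is false, and the non-palindromic competitors are exactly the ones that make the lemma true. For instance, the root determined by $(\tilde b,\tilde a,\tilde b)=(1,0,1)$ has displacement $2(1-0+1)-1=3$, and the two other non-backtracking length-$3$ paths with the same endpoints are those given by the triples $(0,1,3)$ and $(3,1,0)$ (alternating sum $0-1+3=3-1+0=2$), neither of which is palindromic; these two are precisely the ``two other roots'' witnessing rank $2$. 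Restricting to palindromic competitors, as your table does, yields fibre sizes $1$ (when $\tilde a=0$) and $2$ (when $\tilde a\in\{1,3\}$), i.e.\ zero or one other root---neither of which is two---so the table as computed cannot certify rank $2$ for any root, and indeed your closing paragraph ends up gesturing at the pattern rather than matching it to the definition.

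The attempted repair is also off target: the rank of $\alpha$ is defined by counting \emph{all} roots with the same endpoints, so the middle label of a competitor is not part of the data, and ``fixing the endpoints and the middle label'' is not the right fibre. The correct computation is the full enumeration of the twelve triples $(x,y,z)$ with $x\neq y\neq z$ in $\{0,1,3\}$: the alternating sums modulo $8$ are $2$ (attained three times, by $(1,0,1),(0,1,3),(3,1,0)$), $6$ (three times, by $(3,0,3),(0,3,1),(1,3,0)$), and $4,5,7$ (twice each). The palindromic triples lying in a fibre of size $3$ are exactly those with middle entry $0=a_0$, i.e.\ middle label $\sigma_i(0)$, while the palindromes with middle entry $1$ or $3$ lie in fibres of size $2$; this is the content of the lemma. (The paper's own proof reaches the same conclusion more concretely, by locating the middle edge as $[r,r+1]$ with $r$ odd when $a=\sigma_i(0)$ and inspecting the two possible configurations in the Moebius--Kantor graph.)
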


\begin{proof}
Suppose an edge $e=[r,r+1]$ has label $a=\sigma_i(0)$ and let $b\neq a$. Then, by our definition of $S_i$, $r$ is odd. Furthermore, since they have the same label $b$, then the two edges adjacent to $e$ in $\alpha$ have the same increment, which is  either 1 or 5. It is easily seen that $\alpha$ is of rank 2 in both cases. 

Suppose now an edge $e=[r,r+1]$ has label $a=\sigma_i(1)$ or $a=\sigma(3)$ and let $b\neq a$. By the same argument, the two edges adjacent to $e$ in $\alpha$ have the same increment. It is easily seen that $\alpha$ is not of rank 2 in both cases. 
\end{proof}

Let $t$ be a triangle in $X$. For every side of $t$, choose a  triangle in $X$ adjacent to $t$. This defines three roots in the links of the vertices of $t$. We say that  $t$ is \emph{odd}  if the number of such roots of rank 2 is odd (this is well-defined by \cite[\ts 2]{parity}). We say that $X$ is \emph{odd} if every triangle is odd.

\begin{proposition}
	$X$ is odd.
\end{proposition}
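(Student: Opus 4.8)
The plan is to reduce the statement to the local parity lemma just proven, combined with the explicit bijections $\sigma_i = (0\ 1\ 2)^i \sigma_1$. Fix a triangle $t$ in $X$; its three vertices have types $1,2,3$ (in some order, but by the homogeneity and the cyclic symmetry of the $\sigma_i$ we may as well assume the vertices of $t$ carry types $1$, $2$, $3$). Each side $s$ of $t$ is an edge of $X$, hence carries a label in $\{0,1,2\}$, and this label is the same whether read from either endpoint, since $\sigma$ is the edge labelling of the whole complex. Label the three sides of $t$ by $x_1,x_2,x_3 \in \{0,1,2\}$; because $t$ is a face of the completely coloured complex $X$ and incident edges of a triangle receive distinct colours (property (a) of a complete colouring), we have $\{x_1,x_2,x_3\}=\{0,1,2\}$.

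Next I would compute, for each vertex $v$ of $t$ of type $i$, the root $\alpha_v$ obtained by prolonging across $v$: this is the length-$\pi$ path in $S_i$ whose middle edge is the side of $t$ \emph{opposite} to $v$ — no, more precisely its two extreme edges are the two sides of $t$ incident to $v$ and its middle edge is a chosen adjacent triangle's edge; but what matters for rank is the label pattern $b,a,b$ required by the preceding lemma. Observe that the two sides of $t$ meeting at $v$ have \emph{distinct} labels (again property (a)), so to invoke the lemma I must instead take the root whose \emph{middle} edge is a side of $t$ and whose two outer edges lie in the adjacent triangle chosen for that side; the relevant label is then the label $a$ of that side of $t$, and the lemma says the root at the endpoint of type $i$ is of rank $2$ iff $a = \sigma_i(0)$. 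Summing over the three sides: writing $a_i$ for the label of the side of $t$ that is shared with the vertex-link of type $i$ in the role of ``middle edge'', the number of rank-$2$ roots among the three chosen ones equals $\#\{ i \in \{1,2,3\} : a_i = \sigma_i(0)\}$.

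Now I would finish by a direct enumeration. Since $\sigma_1(0)=0$ and $\sigma_i = (0\ 1\ 2)^i\sigma_1$, one has $\sigma_1(0)=0$, $\sigma_2(0)= (0\ 1\ 2)^2(0) = 2$, $\sigma_3(0) = (0\ 1\ 2)^3(0)=0$ — wait, $(0\ 1\ 2)^3 = \mathrm{id}$, so I should recompute carefully: $\sigma_2(0) = (0\ 1\ 2)(0) = 1$ and $\sigma_3(0) = (0\ 1\ 2)^2(0) = 2$, so $\{\sigma_1(0),\sigma_2(0),\sigma_3(0)\} = \{0,1,2\}$, all distinct. The side labels $a_1,a_2,a_3$ form some assignment of $\{0,1,2\}$ to the three vertex-types; I would check that for \emph{every} such assignment the count $\#\{i : a_i = \sigma_i(0)\}$ is odd — i.e.\ equal to $1$ or $3$. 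This is the number of fixed points of the permutation $i \mapsto (\sigma_i(0)\text{-to-}a_i)$ viewed appropriately, and a quick case check over the six bijections $\{1,2,3\}\to\{0,1,2\}$ (or: the displacement permutation $\sigma_i(0)\cdot a_i^{-1}$ is an even permutation iff the number of agreements has the right parity) shows the parity is always odd. The crucial point — and the step I expect to be the only real obstacle — is setting up the correspondence between ``the chosen adjacent triangle across side $s$'' and ``a root of the label pattern $b,a,b$ in the correct link,'' so that the preceding lemma applies with $a$ equal to the label of $s$; once that bookkeeping is pinned down, the parity count is a finite verification and the fact that oddness of the triple is well-defined is quoted from \cite{parity}. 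Hence every triangle of $X$ is odd, so $X$ is odd.
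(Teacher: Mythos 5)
Your proof does not go through as written, and the problem is not just the ``bookkeeping'' step you flag. The labels in $\{0,1,2\}$ live on the \emph{faces} of $X$ (the labelling is $X^2\to\{0,\ldots,n\}$), not on the sides of $t$; the edges of $X$ carry labels in $\{1,2,3\}$. A root at a vertex $v$ of $t$ is a length-$\pi$ path of three edges in the link $S_i$, and link-edges correspond to \emph{faces} of $X$ through $v$. So the root $\alpha_v$ consists of (chosen adjacent triangle along one side of $t$ at $v$), then $t$ itself, then (chosen adjacent triangle along the other side of $t$ at $v$). Its middle label is therefore the face label $a$ of $t$ --- the \emph{same} $a$ for all three vertices --- and its outer labels are those of the two chosen neighbours. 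To invoke the rank-2 lemma (pattern $b,a,b$) you must first arrange the outer labels to agree; the paper does this by choosing all three adjacent triangles with a common label $b\neq a$ (possible since the three faces through any edge have pairwise distinct labels), and then quotes \cite{parity} for independence of the choice. With that setup the count of rank-2 roots is $\#\{i: a=\sigma_i(0)\}$, which equals $1$ because $\sigma_1(0)=0$, $\sigma_2(0)=1$, $\sigma_3(0)=2$ are distinct and exhaust $\{0,1,2\}$.

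Your concluding enumeration is also false on its own terms: it is \emph{not} true that every bijection $\{1,2,3\}\to\{0,1,2\}$ agrees with $i\mapsto\sigma_i(0)$ in an odd number of places. Two bijections of a $3$-set whose ``displacement'' is a $3$-cycle agree in exactly $0$ places (e.g.\ $a_1=1$, $a_2=2$, $a_3=0$ against $0,1,2$), so your ``quick case check'' would produce even counts for two of the six assignments. The argument is rescued precisely by the fact that the three middle labels are not a bijection but are all equal to $a$, whence the count is identically $1$. So the correct computation of $\sigma_i(0)$ is the part of your write-up that survives; the identification of which cell carries the label $a$, and the final parity count, need to be redone as above.
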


\begin{proof}
	Consider a triangle $t$ in $X$ with label $a\in \{0,1,2\}$. Let $b\in \{0,1,2\}$, $b\neq a$. Adjacent to $t$ are three triangles with label $b$. These triangles form, together with $t$, a larger triangle which we call $T$. Suppose $a=\sigma_1(k)$ for some $k\in\{0,1,3\}$. Then $\sigma_2(k)=(0\ 1\ 2)a$, $\sigma_3(k)=(0\ 1\ 2)^2a$; therefore $\sigma_2((0\ 1\ 3)k)=a$ and $\sigma_3((0\ 1\ 3)^2k)=a$. This implies that $a=\sigma_i(0)$ for a unique $i=1,2,3$ which in turns shows $T$ contains a single roots of rank 2. This proves that $t$ is odd.   
\end{proof}

\subsection{Modular complexes} Let $a_0=0<a_1<\ldots <a_n$ be a sequence and  $N_1$, $N_2$, and $N_3$ be integers such that $a_0=0<a_1<\ldots <a_n$ satisfies the condition of Sidon modulo $N_i$ for $i=1,2,3$. Let $G(a_0,a_1,\ldots ,a_n :N_1,N_2,N_3)$ and, respectively, $X(a_0,a_1,\ldots ,a_n: N_1,N_2,N_3)$ denote be the group and complex obtained by applying Th.\ \ref{T - main} with respect to these data and the increasing bijection $\sigma\colon  \{a_0,\ldots, a_n\} \to \{0,\ldots , n\}$.

To illustrate, we explain how Th.\ \ref{T - main} provides an alternative approach to \cite[\ts 13]{surgery} for a construction ``mixing'' the Moebius--Kantor local geometry to that of $\tilde A_2$ buildings in a same complex. Such a complex was said to be ``of strict type $A_\mathrm{MK}+\tilde A_2$''. It was obtained in \cite{surgery} by a surgery construction, relying on the classification of collars between two ``partial complexes'' of both types. Here we can obtain similar results in as a direct consequence of Theorem \ref{T - main}. For instance, using the terminology of \cite{surgery}, we claim:

\begin{proposition}[Compare {\cite[Prop.\ 13.1]{surgery}}] The modular complex $X(0,1,3: 7,7,8)$ is of strict type $A_\mathrm{MK}+\tilde A_2$.
\end{proposition}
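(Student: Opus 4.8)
The strategy is to compute the ring puzzle problem associated to the modular complex $X(0,1,3:7,7,8)$ via Theorem~\ref{T - main}, and match the resulting local structure against the defining properties of a complex of strict type $A_\mathrm{MK}+\tilde A_2$ as set out in \cite{surgery}. The data here are: the single Sidon sequence $a_0=0,a_1=1,a_2=3$ (so $n=2$), the three moduli $r_1=7$, $r_2=7$, $r_3=8$, and the increasing bijection $\sigma\colon\{0,1,3\}\to\{0,1,2\}$ used for all three vertex types. By Theorem~\ref{T - main}, $X$ is a CAT(0) $2$-complex with equilateral triangle faces labelled in $\{0,1,2\}$, whose links at vertices of type $j$ are the labelled graphs $S_{r_j}(0,1,3)$, and the embedded flats are exactly the solutions of the triangle ring puzzle whose type-$j$ rings record the pairs of triples $(a,b,c),(a',b',c')$ from $\{0,1,3\}$ with $a-b+c\equiv a'-b'+c'\pmod{r_j}$, $a\neq b\neq c$, $a'\neq b'\neq c'$, $(a,b,c)\neq(a',b',c')$.

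\textbf{Step 1: identify the two kinds of links.} For $j=3$ (modulus $8$) the link graph $S_8(0,1,3)$ is, by Proposition~\ref{P - X is MK}, the Moebius--Kantor graph: vertex set $\IZ/16\IZ$, edges $[k,k+1]$ and $[k,k+5]$ for $k$ even. For $j=1,2$ (modulus $7$) the link is $S_7(0,1,3)$ with vertex set $\IZ/14\IZ$, edges $[k,k+1]$ and $[k,k+5]$ for $k$ even; one checks directly that this bipartite cubic graph has girth $6$ and is in fact the incidence graph of the Fano plane, i.e.\ the Heawood graph, which is the link of a vertex in the Bruhat--Tits building of type $\tilde A_2$ over $\IF_2$. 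Thus two of the three vertex types are ``$\tilde A_2$-vertices'' and one is a ``Moebius--Kantor vertex'', exactly the mixed local geometry described in \cite[\ts 13]{surgery}. I would record this as a lemma, carrying out the small girth/isomorphism verifications explicitly (they are finite checks on $14$- and $16$-vertex graphs).

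\textbf{Step 2: match the combinatorial collar data.} The content of ``strict type $A_\mathrm{MK}+\tilde A_2$'' in \cite{surgery} is not just that the two link types occur, but that they are glued along edges in the prescribed way, with the prescribed set of admissible collars and no others. Here the gluing is governed entirely by the ring puzzle of Theorem~\ref{T - main}: an edge of $X$ has two endpoints whose types are constrained by the bijections $\tau_j\colon\{0,1\}\to\{1,2,3\}\setminus\{j\}$, and the shapes admissible around an edge are dictated by which triples from $\{0,1,3\}$ share an alternating sum modulo $7$ respectively modulo $8$. I would tabulate the solution set of $a-b+c\equiv a'-b'+c'$ in each modulus (a short finite enumeration over the $3^3$ ordered triples, discarding those with $a=b$ or $b=c$), read off the resulting rings, and check that this list of rings coincides — after translating notation — with the list of admissible collars between the ``MK partial complex'' and the ``$\tilde A_2$ partial complex'' appearing in the proof of \cite[Prop.\ 13.1]{surgery}. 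Since by hypothesis we may invoke everything stated earlier, the CAT(0) property, vertex-transitivity and the exact description of flats come for free from Theorem~\ref{T - main}; what remains is purely the bookkeeping identification of the two presentations.

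\textbf{Main obstacle.} The genuine difficulty is bridging the two languages: \cite{surgery} defines type $A_\mathrm{MK}+\tilde A_2$ through a surgery/collar formalism (partial complexes and the classification of collars between them), whereas Theorem~\ref{T - main} delivers $X$ through a ring puzzle. The crux of the argument is a dictionary lemma asserting that the rings produced by the puzzle for the data $(0,1,3:7,7,8)$ are in label-preserving bijection with the admissible collars of \cite{surgery}, so that a flat (equivalently, any local configuration) is legal in one formalism iff it is legal in the other. Establishing that dictionary — getting the labelling conventions, the role of $\sigma$ versus the $\tau_j$, and the sign/polarity normalization of \ts\ref{S - Polarity} to line up exactly — is where essentially all the work lies; once it is in place, the proposition is immediate from Theorem~\ref{T - main} together with \cite[Prop.\ 13.1]{surgery}. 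A secondary point to be careful about is confirming that the modulus $7$ is indeed admissible here, i.e.\ that $0,1,3$ is a Sidon sequence modulo $7$; this is checked in the text (the sums $0,1,2,3,4,6$ of pairs are distinct mod $7$), so Theorem~\ref{T - main} genuinely applies to these data.
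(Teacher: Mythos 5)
Your Step 1 is exactly the paper's proof: the argument given there is simply ``follows as in Prop.~\ref{P - X is MK}'', i.e.\ one identifies $S_8(0,1,3)$ as the Moebius--Kantor graph and $S_7(0,1,3)$ as the Heawood graph (the $(3,6)$-cage, hence the $\tilde A_2$-building link over $\IF_2$), and the strict type is read off from which links occur. Your Step 2 (matching the collar classification of \cite{surgery} against the ring puzzle) is not part of the paper's argument and is more than the statement requires, but it is not wrong.
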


The proof follows as in Prop.\ \ref{P - X is MK} above. Similarly, the complex $X(0,1,3: 7,8,8)$ is a complex of strict type $A_\mathrm{MK}+\tilde A_2$, and it is not isomorphic to $X(0,1,3: 7,7,8)$. These ``modular complexes'' seem particularly interesting  when $N_i$ is small relative to $a_n$. Furthermore,  one can use bijections other than $\sigma$ to twist the construction of modular complexes (for example, the complex $X$ described in \ts\ref{S - MK odd} can be viewed as a ``twisted modular complex''). We define ``the'' modular complex to be as untwisted as possible:

\begin{definition}\label{D- modular complex}
Let $a_0=0<a_1<\ldots <a_n$ be a Sidon sequence. We let 
\[
X(a_0,\ldots, a_n):=X(a_0,a_1,\ldots ,a_n: N_{00},N_{00},N_{00}),
\] 
where $N_{00}:=N_{00}(a_0,\ldots, a_n)$, and call this complex \emph{the modular complex} associated with $a_0=0<a_1<\ldots <a_n$.
\end{definition}

By definition, the Mian--Chowla complexes are modular complexes in this sense. Due to  symmetry in the data, follows by Theorem \ref{T - main} that the automorphism group of the modular complex $X(a_0,\ldots, a_n)$ is transitive on the vertex set. (This would not be true of  generalized modular complex, for example, $X(0,1,3: 7,7,8)$ is not vertex transitive.)

\begin{remark} These examples can be further generalized.  If $A$ is a finite abelian group, one says a set $\{a_0,a_1,\ldots, a_n\}$ in $A$ is Sidon if the number of pairs of elements in $\{a_0,a_1,\ldots, a_n\}$ with a given sum is at most two. It is not difficult to extend our results to such Sidon sets; this gives additional, natural  generalizations of our Sidon complexes.
\end{remark}

It would be interesting to study the asymptotic properties of the $X(a_0,\ldots, a_n)$ complexes, for a fixed infinite Sidon sequence $a_0,a_1,a_2,\ldots$, including for example, the Mian--Chowla sequence or the Rusza sequence.  We shall not pursue this direction on the present occasion, and conclude this paper with an application to the study of Moebius--Kantor complexes.

\subsection{Uniqueness of the odd Moebius--Kantor complex}\label{S - applications} 
In this section we prove that the twisted modular complex constructed in \ts\ref{S - MK odd}  is the unique odd Moebius--Kantor complex up to isomorphism; furthermore, we establish an unique extension theorem for automorphisms. This is done by ``mapping'' the data associated with the Sidon sequence to an arbitrary odd complex, and may compared to \cite{uniqueeven}, in which a similar result is proved in the even case: the even Moebius--Kantor complex is unique up to isomorphism. This was established in \cite{uniqueeven} by ``mapping''   Pauli matrices from the (even) Pauli complex to an arbitrary even complex.

\begin{theorem} Let $X$ and $X'$ be odd Moebius--Kantor complexes and let $x\in X$ and $x'\in X'$ be vertices. Let $B_1(x)$ and $B_1(x')$ denote the ball of radius 1 with center $x$ and $x'$, respectively, and let  $\p_1\colon B_1(x)\to B_1(x')$ be an isomorphism. Then there exists a unique isomorphism $\p\colon X\to X'$ which coincides with $\p_1$ on $B_1(x)$. 
\end{theorem}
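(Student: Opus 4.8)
The strategy is to realize any odd Moebius--Kantor complex as the complex $X$ constructed in \ts\ref{S - MK odd} from the Sidon sequence $0,1,3$ modulo $8$, and then to invoke the uniqueness clause of Theorem \ref{T - main homogeneous colourings} (equivalently, the vertex-transitivity and the unique-extension property recorded after Theorem \ref{T - main with signs}). Concretely, let $X''$ denote the twisted modular complex from \ts\ref{S - MK odd}. It suffices to produce, for any odd Moebius--Kantor complex $Y$ and any vertex $y\in Y$, an isomorphism $B_1(y)\to B_1(z)$ onto a ball in $X''$ which is compatible with the labellings; the theorem then follows by transitivity of $\Aut(X'')$ on vertices and the fact established in \ts\ref{S - puzzles embed} / \ts\ref{S - homogeneous} that a coloured isomorphism between $1$-balls extends uniquely to the whole complex. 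So the crux is a \emph{local} statement: the ball of radius $1$ in an odd Moebius--Kantor complex carries a canonical $(0,1,2)$-labelling of its faces and $(1,2,3)$-labelling of its edges, unique up to the symmetries already present, matching the one coming from the Sidon data.

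\textbf{Key steps.} First I would recall that the link of every vertex of a Moebius--Kantor complex is \emph{rigid}: the Moebius--Kantor graph has automorphism group acting transitively on edges but with small vertex/edge stabilizers, and in particular (as used in the lemma of \ts\ref{S - MK odd}) a root of consecutive labels $b,a,b$ has rank $2$ iff $a$ is a distinguished colour at that vertex. Using the oddness hypothesis, I would show that around each triangle $t$ the three ``distinguished directions'' at its vertices are forced, exactly reproducing the combinatorics of the proof that $X''$ is odd: the permutations $(0\,1\,2)$ relating $\sigma_1,\sigma_2,\sigma_3$ are pinned down by requiring that each larger triangle $T$ (built from $t$ and its three neighbours of a fixed second colour) contains exactly one rank-$2$ root. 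Step two: translate this into a complete $(3,3,16)$-colouring of $Y$ in the sense of \ts\ref{S - homogeneous}, with vertex colours in $\{1,2,3\}$ determined by the cyclic position of the distinguished direction and face colours in $\{0,1,2\}$ determined by the rank data; check that the induced colouring of each vertex link is coloured-isomorphic to $S_{r_j}(0,1,3)$ with $r_j=8$, i.e.\ to $L_j$ in the application of Theorem \ref{T - main homogeneous colourings}. Step three: feed $(Y,\delta_Y)$ and $(X'',\delta_{X''})$ into the uniqueness half of Theorem \ref{T - main homogeneous colourings} — two $(3,3,16)$-coloured complexes with the same coloured links are coloured-isomorphic, and any coloured isomorphism of $1$-balls extends uniquely — to conclude $Y\simeq X''$. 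Step four: given $X,X'$ odd Moebius--Kantor and $\p_1\colon B_1(x)\to B_1(x')$, compose the isomorphisms $X\simeq X''\simeq X'$ and use vertex-transitivity of $\Aut(X'')$ to adjust so that the composite agrees with $\p_1$ on $B_1(x)$; uniqueness of $\p$ is then the unique-extension statement. Finally I would remark that $\p_1$ is automatically label preserving after precomposing with a coloured automorphism of the link, since $\Aut$ of the Moebius--Kantor graph is vertex-transitive, so no generality is lost in assuming the $1$-balls carry the coloured structure.

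\textbf{Main obstacle.} The delicate point is the local rigidity in steps one and two: one must verify that \emph{oddness alone} forces the labels, i.e.\ that the only coloured structures on a Moebius--Kantor link making every triangle odd are the ones arising from the Sidon data $0,1,3 \bmod 8$ with the cyclic twist $(0\,1\,2)$. This requires a careful analysis of how the rank-$2$ roots are distributed around a vertex link — equivalently, which edges of the Moebius--Kantor graph lie on the ``short'' $2a_1-1=1$ increments versus the $2a_2-1=5$ increments — and showing that the parity constraint in \cite[\ts 2]{parity}, imposed at every triangle, propagates consistently and admits no exotic solution. I expect this to occupy the bulk of the argument; once the local picture is nailed down, the global part is a routine application of the machinery of \ts\ref{S - homogeneous}--\ts\ref{S - puzzles embed}. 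A secondary nuisance is bookkeeping the three possible ``signs'' $\e_j$ of \ts\ref{S- signs}, but by the polarity theorem of \ts\ref{S - Polarity} all eight resulting complexes are isomorphic, so this does not affect the conclusion.
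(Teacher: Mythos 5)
Your overall strategy is genuinely different from the paper's, and it has a gap at its core. The paper does \emph{not} classify odd Moebius--Kantor complexes first and then compose isomorphisms; it never puts a colouring on the target complex $X'$ at all. Instead it extends $\p_1$ ball by ball: the tripod rigidity of the Moebius--Kantor graph (2-arc-transitivity with trivial tripod stabilizers) gives, for each boundary vertex $y$, a unique candidate extension $\psi_y$ of the link isomorphism, and the oddness of the triangles of $X'$, together with the isomorphism-invariance of the rank of roots, is used only to check that the $\psi_y$ at adjacent boundary vertices agree on the shared triangles. Your plan instead requires showing that oddness alone forces a canonical $(3,3,3)$-colouring (note: $m=n+1=3$ face colours here, not $16$) on an arbitrary odd Moebius--Kantor complex, coloured-isomorphic to the one built from the Sidon data $0,1,3 \bmod 8$. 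You correctly flag this as the main obstacle, but you do not carry it out, and as stated the recipe is incomplete: the rank-$2$ criterion distinguishes only the single label $\sigma_i(0)$ at a vertex of type $i$; it gives no intrinsic way to tell the remaining two face labels ($\sigma_i(1)$ versus $\sigma_i(3)$, i.e.\ the increment-$1$ versus increment-$5$ edges of the link) apart, nor to orient the choice of which vertex class gets colour $\tau_j(0)$. So ``face colours determined by the rank data'' does not yet define a complete colouring, and the reduction to the uniqueness half of Theorem \ref{T - main homogeneous colourings} does not go through without substantial further work.

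There is a second gap in your step four. Even granting $X\simeq X''\simeq X'$, the theorem asks you to extend an \emph{arbitrary} isomorphism $\p_1\colon B_1(x)\to B_1(x')$, whereas the machinery of \ts\ref{S - homogeneous} only extends \emph{coloured} isomorphisms of $1$-balls, and the group it produces is the coloured automorphism group. Vertex-transitivity of $\Aut(X'')$ does not let you ``adjust'' the composite to match a $1$-ball isomorphism that fails to preserve the colouring; the existence of automorphisms of $X''$ inducing non-coloured $1$-ball automorphisms is essentially the statement being proved, so this step is circular unless your step one is strengthened to show that \emph{every} isomorphism of $1$-balls between odd complexes is automatically compatible with the canonical colourings. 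The paper avoids both issues by working directly with invariant data (root ranks and the parity of triangles) rather than with the colouring.
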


\begin{proof}
We may assume that $X$ is the complex constructed in \ts\ref{S - MK odd}; furthermore, by symmetry, we may assume that $x$ is a vertex of type 1 in this complex (associated with the map $\sigma_1$). Let us first extend $\p_1$ to the ball $B_2(x)$ of radius 2. We begin with the following lemma.

\begin{lemma}\label{L - MK tripod} 
Suppose $S$ and $S'$ are Moebius--Kantor graphs, $T\subset S$ and $T'\subset S'$ are tripods, and $\psi_0\colon T\to T'$ is an isomorphism. Then there exists a unique isomorphism $\psi\colon S\to S'$ which coincides with $\psi_0$ on $T$.  
\end{lemma}

\begin{proof} 
Existence follows because the Moebius-Kantor graph is 2-arc-transitive. If $e$ and $f$ are consecutive edges in $T$, then there exists a graph isomorphism $S\to S'$ taking respectively $e$ and $f$ to $\psi_0(e)$ and $\psi_0(f)$. This isomorphism is unique since the stabilizer of a tripod is trivial.
\end{proof}

For every vertex $y$ in the sphere $\del B_1(x)$ of radius 1 centred at $x$, we let $\psi_y$ denote the unique isomorphism  between the ball $B_1(y)$ and the ball $B_1(\p_0(y))$ induced by the previous lemma, which extends $\p_0$ on $B_1(x)$.  

\begin{lemma}\label{L - lemma psi consistent}
The maps $\psi_y$ are consistent. 
\end{lemma}

\begin{proof}
We must show that for every edge $[y,z]$ in $\del B_1(x)$, the maps $\psi_y$ and $\psi_z$ coincide on set of triangles adjacent to $[y,z]$. Since $x$ is of type 1, we may assume that $y$ is of type 2 and $z$ of type 3.  Let $[t_1,y,z]$ be a triangle distinct from $[x,y,z]$, and consider the two triangles $[t_2,x,y]$ and $[t_3,x,z]$ whose labels coincide with that of $[t_1,y,z]$. We write $\alpha_x$, $\alpha_y$ and $\alpha_z$ for the three roots, respectively at $x$, $y$ and $z$, associated with this configuration. There are two cases. 

Suppose first that $[x,y,z]$ is labeled by $a=\sigma_1(0)$. Since $a\neq \sigma_2(0)$ and $a\neq \sigma_3(0)$, both roots $\alpha_y$ and $\alpha_z$ fail to be of rank 2. Since $\p_0$, $\psi_y$ and $\psi_z$ are isomorphism,  $\p_0(\alpha_x)$ is of rank 2 in $X'$, while $\psi_y(\alpha_y)$ and $\psi_z(\alpha_z)$ are not. Since the triangle $\p_0([x,y,z])$ is odd, the two triangles $\psi_y([t_1,y,z])$ and $\psi_z([t_1,y,z])$ must coincide.   

Suppose next that $[x,y,z]$ is labeled by $a=\sigma_2(0)$ or $a=\sigma_3(0)$. The two cases are symmetric and we  assume $a=\sigma_2(0)$ to fix the ideas. Then $\alpha_y$ is of rank 2, while $\alpha_x$ and $\alpha_z$ are not, and the same must be true of their images. Again, since the triangle $\p_0([x,y,z])$ is odd, the two triangles $\psi_y([t_1,y,z])$ and $\psi_z([t_1,y,z])$  coincide.     
\end{proof}

By Lemma \ref{L - lemma psi consistent}, the map 
\[
\p_2:=\p_1\vee \bigvee_{y\in \delta B_1(x)} \psi_y
\]
is well defined. It induces an isomorphism between $B_2(x)$ and $B_2(x')$ which extends $\p_1$ by definition. Furthermore, this extension is unique by Lemma \ref{L - MK tripod}.

Let $n\geq 2$. Let $\p_n\colon B_n(x)\to B_n(x')$ is an isomorphism, and fix a vertex  $y$ in the sphere $\del B_n(x)$ of radius $n$ centred at $x$. If there does not exist a triangle $[y,y_1,y_2]$ in $B_n$ such that $[y,y_1,y_2]\cap \del B_n=\{y\}$, we  let $\psi_y$ denote the unique isomorphism  between the ball $B_1(y)$ and the ball $B_1(\p_n(y))$ induced by the Lemma \ref{L - MK tripod}, which extends $\p_n$ on $B_n(x)$.

Suppose that there exists a triangle $[y,y_1,y_2]$ in $B_n$ such that $[y,y_1,y_2]\cap \del B_n=\{y\}$. Lemma \ref{L - MK tripod} provides two maps $\psi_y^1$ and $\psi_y^2$ between the ball $B_1(y)$ and the ball $B_1(\p_n(y))$ induced by the Lemma \ref{L - MK tripod}, which extends the restriction  $\p_n$ to the set of triangles adjacent to $[y,y_1]$ and $[y,y_2]$, respectively.  We show that:

\begin{lemma} 
$\psi_y^1=\psi_y^2$; furthermore, they extend the restriction of $\p_n$ to $B_n\cap B_1(y)$. 
\end{lemma}

\begin{proof}
Let $a$ denote the label of $[y,y_1,y_2]$. Consider triangles $[t_0,y_1,y_2]$, $[t_1,y,y_1]$ and $[t_2,y,y_2]$ with the same label $b\neq a$, and the corresponding roots $\alpha_y$, $\alpha_{y_1}$ and $\alpha_{y_2}$, respectively. We assume that $y$ is of type 1. The other cases are similar by symmetry.

Suppose $a=\sigma_1(0)$. Then $\alpha_y$ is a root of rank 2, while $\alpha_{y_1}$ and $\alpha_{y_2}$ are not, and $\psi_y^1$ takes $[t_2,y,y_2]$ to the unique triangle in $X'$ such that $\psi_y^1(\alpha_y)$ is a root of rank 2 in the link of $\p_n(y)$. Since the triangle $\p_n([y,y_1,y_2])$ is odd in $X'$, this shows that $\p_n$ and $\psi_y^1$ coincide on $B_n\cap B_1(y)$. By symmetry,  $\p_n$ and $\psi_y^2$ coincide on $B_n$. Since $\psi_y^1$ and $\psi_y^2$ coincide on (at least) a tripod, they must coincide everywhere by Lemma \ref{L - MK tripod}. The two other cases, namely, $a=\sigma_2(0)$ and $a=\sigma_3(0)$, are similar.
\end{proof}

In the case that there exists a triangle $[y,y_1,y_2]$ in $B_n$ such that $[y,y_1,y_2]\cap \del B_n=\{y\}$, we let $\psi_y:=\psi_y^1=\psi_y^2$; this now defines $\psi_y$ for all $y\in \delta B_n(x)$. A direct generalization of Lemma \ref{L - lemma psi consistent} to larger balls  show that the maps $\psi_y$ are consistent.

It follows that 
\[
\p_{n+1}:=\p_n\vee \bigvee_{y\in \delta B_n(x)} \psi_y
\]
is well defined, and induces an isomorphism between $B_{n+1}(x)$ and $B_{n+1}(x')$ which extends $\p_n$ by definition. This extension is unique by Lemma \ref{L - MK tripod}. Thus, $\p:=\dirlim \p_n$ is an isomorphism from $X$ to $X'$ which extends $\p_1$ uniquely.
\end{proof}

\end{document}